\newtheorem{theorem}{Theorem}
\newtheorem{remark}[theorem]{Remark}
\theoremstyle{remark}
\begin{document}
\title{Shells without drilling rotations: a representation theorem in the framework of the geometrically nonlinear 6--parameter resultant shell theory}
\author{Mircea B\^{\i}rsan%
\thanks{\,Mircea B\^{\i}rsan, Lehrstuhl f\"ur Nichtlineare Analysis und Modellierung, Fakult\"at f\"ur  Mathematik, Universit\"at Duisburg-Essen, Campus Essen, Thea-Leymann Str. 9, 45127 Essen, Germany, email: mircea.birsan@uni-due.de ; and
Department of Mathematics, University ``A.I. Cuza'' of Ia\c{s}i, 700506 Ia\c{s}i,  Romania}\,\,\,
and\, Patrizio Neff\,%
\thanks{Patrizio Neff, Head of Lehrstuhl f\"ur Nichtlineare Analysis und Modellierung, Fakult\"at f\"ur  Mathematik, Universit\"at Duisburg-Essen, Campus Essen, Thea-Leymann Str. 9, 45127 Essen, Germany, email: patrizio.neff@uni-due.de, Tel.: +49-201-183-4243}
}


\maketitle

\begin{abstract}
In the framework of the geometrically nonlinear 6--parameter resultant shell theory we give a characterization of the shells without drilling rotations. These are shells for which the strain energy function $W$ is invariant under the superposition of drilling rotations, i.e. $W$ is insensible to the arbitrary local rotations about the third director $\boldsymbol{d}_3\,$. For this type of shells we show that the strain energy density $W$ can be represented as a function of certain combinations of the shell deformation gradient $\boldsymbol{F}$  and the surface gradient of  $\boldsymbol{d}_3\,$, namely $W\big(\boldsymbol{F}^{ T}\boldsymbol{F} , \,
        \boldsymbol{F}^T \boldsymbol{d}_3 \,, \, \boldsymbol{F}^T\mathrm{Grad}_s\boldsymbol{d}_3 \big)$.
For the case of isotropic shells we present explicit forms of the strain energy function $W$ having this property.
\end{abstract}

\section{Introduction}\label{sec1}

There exist many variants of the shell theory in the literature and various different models for shells and plates \cite{Antman95,Ciarlet00}. Some of them are classical, such as the Kirchhoff--Love shell theory or the Reissner--Mindlin theory \cite{Love,Naghdi72,Sokolnikoff,Timoshenko51}. These approaches do not take into account the drilling rotations in shells, i.e. the rotations of points about the shell filament.
Although the classical theories can handle the majority of practical shell problems, there exist however situations when the drilling rotations cannot be neglected. For instance, in the case of shells with a certain microstructure or for branching and self-intersecting shells one should account also for the drilling rotations \cite{Pietraszkiewicz-book04,Pietraszkiewicz07,Libai98,Steigmann99,Steigmann13}. The question whether one has to consider a drilling degree of freedom or not is intensively discussed in the literature.  Since in the classical
Kirchhoff--Love and  Reissner--Mindlin there is no drilling degree of freedom present, there is also no energy storage due to this variable.

Other non-classical approaches are more general, such as the Cosserat--type theories of shells or micropolar shells \cite{Cosserat09neu,Altenbach-Erem-Review}. A general theory of micropolar shells has been developed by Eremeyev and Zubov \cite{Eremeyev08}. It is conceivable, that a Cosserat--type shell model has the possibility of drilling degrees of freedom. In this case it is a matter of assumption of what is the contribution to the  stored elastic energy. For these generalized models each material point has six degrees of freedom (parameters): 3 for the translations and 3 rotational degrees of freedom. In order to characterize  the independent rotations of material points, one considers a triad of vectors  $\,\{\boldsymbol{d}_i\}$ (called \emph{directors}) attached to every point. The drilling rotations are then described as rotations about the third director $\boldsymbol{d}_3\,$.

It is difficult to physically justify the drilling degree of freedom. Therefore we wish to characterize the shells \emph{without} drilling rotations, i.e. the case when the strain energy of the shell is insensible to rotations about  the   director $\boldsymbol{d}_3\,$. In the framework of the 6-parameter resultant shell theory we prove a representation theorem for shells without drilling rotations, which asserts that the strain energy density $W$ of such shells can be represented as a function of the following arguments
$$W = W\big(\boldsymbol{F}^{ T}\boldsymbol{F} , \,
        \boldsymbol{F}^T \boldsymbol{d}_3 \,, \, \boldsymbol{F}^T\mathrm{Grad}_s\boldsymbol{d}_3 \big),  $$
where $\boldsymbol{F}$ is the shell deformation gradient tensor and $\,\mathrm{Grad}_s\,$ denotes the surface gradient operator.

It is now firmly established  that the  6-parameter resultant shell theory \cite{Libai98,Pietraszkiewicz-book04,Pietraszkiewicz04} has the same kinematical structure as the theory of Cosserat shells. The shells of Cosserat--type without drilling rotations have been studied extensively by Zhilin in \cite{Zhilin76,Zhilin06}. We compare next our results with the model of Zhilin and find a close relation, especially in the linearized theory.

In the last section we consider the explicit form of the strain energy function for isotropic shells made of a physically linear material. The existence of minimizers for 6--parameter elastic shells has been proved in \cite{Birsan-Neff-JElast-2013,Birsan-Neff-MMS-2013} under certain conditions on the constitutive coefficients which insure the positive definiteness of $W$.
However, the existence theorem presented in  \cite{Birsan-Neff-MMS-2013} does not apply to shells without drilling rotations, since we obtain here a limit case when the strain energy function is only positive semi-definite. In this case one could adapt the methods presented for Cosserat plates in \cite{Neff_plate04_cmt,Neff_plate07_m3as} and prove the existence of minimizers for shells without drilling rotations.

This discussion on the drilling degree of freedom is intimately related to the value of the Cosserat couple modulus $\,\mu_c\,$ in the shell models derived by dimensional reduction from three-dimensional  Cosserat elasticity, where the parent model is a hyperelastic micropolar model \cite{Neff_zamm06,Neff_Cosserat_plasticity05,Neff_Danzig05}. Thus,  the case of shells without drilling degree of freedom corresponds to $\,\mu_c=0\,$. This is a degenerate case, which was studied extensively by Neff in \cite{Neff_plate07_m3as}, see also \cite{Neff_Jeong_Conformal_ZAMM08,Jeong_Neff_ZAMM08,Neff_Munch_CMT09,Jeong_Neff_MMS2010}. In general, it can be said, that the nonlinear theories without drill are \emph{not} well-posed as minimization problems, since a certain control of the third director in combination with the coupling of the director to the shell surface is missing. Only in the linearized models this problem can be solved \cite{Neff_Habil04,Neff_Danzig05,Neff_Chelminski_ifb07,Neff_Hong_Reissner08}. Thus, from a mathematical point of view, the inclusion of the drilling degree of freedom stabilizes somehow the model at the expense of physical clarity.

\section{Field equations for 6--parameter elastic shells}\label{sec2}
\vspace{1pt}

In this section we repeat the basic equations of the 6--parameter resultant shell theory and  present some useful relations.

Let us denote with $S^0$ the base surface of a general shell in the reference configuration and with $S$ the  base surface   in the deformed configuration. The shell is referred to a fixed Cartesian frame with origin $O$ and unit vectors $\{\boldsymbol{e}_1 , \boldsymbol{e}_2 , \boldsymbol{e}_3\}$ along the coordinate axes. The reference configuration of the shell will be described by the position vector $\boldsymbol{y}^0$  and the structure tensor $\boldsymbol{Q}^{0}$, which is a proper orthogonal tensor. In order to represent the structure tensor $\boldsymbol{Q}^{0}$ we consider an orthonormal triad of vectors $\{\boldsymbol{d}^0_1 , \boldsymbol{d}^0_2 , \boldsymbol{d}^0_3\}$ (called \emph{directors}) attached to every point of $S^0$ such that $\boldsymbol{Q}^{0} = \boldsymbol{d}_i^0  \otimes \boldsymbol{e}_i\,$ \cite{Pietraszkiewicz-book04,Eremeyev06}. Here we employ the summation convention over repeated indexes. The Latin indexes $i,j,...$ take the values $\{1,2,3\}$, while Greek indexes $\alpha,\beta,...$ the values $\{1,2\}$.

We denote the material curvilinear coordinates on the surface $S^0$ with $(x_1,x_2)\in \omega$. The set $\omega$ is assumed to be a bounded open domain with Lipschitz boundary $\partial\omega$ in the $Ox_1x_2$ plane. Then the reference configuration  of the shell is characterized by the functions
\begin{equation*}
\begin{array}{l}
    \,\boldsymbol{y}^0:\omega\subset \mathbb{R}^2\rightarrow\mathbb{R}^3,\qquad\qquad\,\,\boldsymbol{y}^0=\boldsymbol{y}^0(x_1,x_2) ,\\
    \boldsymbol{Q}^{0}:\omega\subset \mathbb{R}^2\rightarrow \mathrm{SO}(3), \qquad\,\,\, \boldsymbol{Q}^{0} = \boldsymbol{d}_i^0 (x_1,x_2)\otimes \boldsymbol{e}_i\,.
    \end{array}
\end{equation*}

Let us designate by $\boldsymbol{y}(x_1,x_2)$ the position vector of the points of $S$ and by $ \boldsymbol{R}(x_1,x_2) = \boldsymbol{d}_i (x_1,x_2)\otimes \boldsymbol{e}_i\in \mathrm{SO}(3)\,$ the structure tensor in the deformed configuration. Here $\{\boldsymbol{d}_i(x_1,x_2)\}$ is the orthonormal triad of directors attached to the point of $S$ with the initial curvilinear coordinates $(x_1,x_2)$. Then, one can characterize the deformation of the elastic shell by means of the functions
\begin{equation*}
    \boldsymbol{y}=\boldsymbol{\chi}(\boldsymbol{y}^0),\qquad \boldsymbol{Q}^e= \boldsymbol{d}_i\otimes \boldsymbol{d}_i^0\in \mathrm{SO}(3),
\end{equation*}
where $\boldsymbol{\chi}:S^0\rightarrow\mathbb{R}^3$ represents the deformation of the base surface of the shell and $\boldsymbol{Q}^e=\boldsymbol{R}\,\boldsymbol{Q}^{0,T} $ describes the (effective)  elastic rotation.

For the reference base surface $S^0$ we denote with $\{\boldsymbol{a}_1,\boldsymbol{a}_2 \}$ the (covariant) base vectors given by $\boldsymbol{a}_\alpha=   { \partial\boldsymbol{y}^0}/{\partial x_\alpha}\,$ and with $\boldsymbol{n}^0 = { \boldsymbol{a}_1\times\boldsymbol{a}_2}/{ \| \boldsymbol{a}_1\times\boldsymbol{a}_2\|}\,$ the unit normal to $S^0$. We introduce also the reciprocal (contravariant) base vectors $\{\boldsymbol{a}^1,\boldsymbol{a}^2 \}$ and the notations $\boldsymbol{a}_3=\boldsymbol{a}^3= \boldsymbol{n}^0$ such that the relations $\boldsymbol{a}_i\cdot \boldsymbol{a}^j=\delta_i^j$ (the Kronecker symbol) holds. In the reference configuration $S^0$ we choose the initial director $\boldsymbol{d}^0_3$ such that $\boldsymbol{d}^0_3=\boldsymbol{n}^0$.

In order to present the elastic shell strain measures and bending--curvature measures we designate by $\partial_\alpha  \equiv \tfrac{\partial }{\partial x_\alpha}\,$ the partial derivative with respect to $x_\alpha$ and by $\boldsymbol{F}=\text{Grad}_s\boldsymbol{y}=\partial_\alpha \boldsymbol{y}\otimes\boldsymbol{a}^\alpha$ the shell deformation gradient tensor. We also employ the notations
\begin{equation}\label{0,5}
\begin{array}{l}
    \boldsymbol{a}=\boldsymbol{a}_\alpha\otimes \boldsymbol{a}^\alpha=
    a_{\alpha\beta}\boldsymbol{a}^\alpha\otimes \boldsymbol{a}^\beta= a^{\alpha\beta}\boldsymbol{a}_\alpha\otimes \boldsymbol{a}_\beta=
    \boldsymbol{d}^0_\alpha\otimes \boldsymbol{d}^0_\alpha,\quad
    a=\sqrt{\det(a_{\alpha\beta})_{2\times 2}}>0,\vspace{4pt}\\
     \boldsymbol{b}= -\text{Grad}_s\boldsymbol{n}^0=-\partial_\alpha \boldsymbol{n}^0\otimes\boldsymbol{a}^\alpha,\\
     \boldsymbol{c}=-\boldsymbol{n}^0\times\boldsymbol{a} =  -\boldsymbol{a}\times  \boldsymbol{n}^0 = \dfrac{1}{\sqrt{a}}\,\epsilon_{\alpha\beta}\,\boldsymbol{a}_\alpha\otimes \boldsymbol{a}_\beta = \sqrt{a}\,\epsilon_{\alpha\beta}\,\boldsymbol{a}^\alpha\otimes \boldsymbol{a}^\beta = \epsilon_{\alpha\beta}\,\boldsymbol{d}^0_\alpha\otimes \boldsymbol{d}^0_\beta\,,
     \end{array}
\end{equation}
where $\epsilon_{\alpha\beta}\,$ is the two-dimensional alternator ($\epsilon_{12}=-\epsilon_{21}=1\,,\,\epsilon_{11}=\epsilon_{22}=0$). The tensors $\boldsymbol{a}$ and $\boldsymbol{b}$ are the first and second fundamental tensors of the surface $S^0$, while $\boldsymbol{c}$ is called the alternator tensor of $S^0$ \cite{Zhilin06}. We have $\,\,\boldsymbol{a}^T=\boldsymbol{a}$ , $\boldsymbol{b}^T=\boldsymbol{b}\,\,$ and $\,\,\boldsymbol{c}^T=-\boldsymbol{c}$.

Then, the  elastic shell strain tensor $\boldsymbol{E}^e$ in the material representation is \cite{Eremeyev06,Pietraszkiewicz-book04}
\begin{equation}\label{1}
    \boldsymbol{E}^e=\boldsymbol{Q}^{e,T}\text{Grad}_s\,\boldsymbol{y}- \text{Grad}_s\,\boldsymbol{y}^0=\big(\boldsymbol{Q}^{e,T}\partial_\alpha \boldsymbol{y}- \partial_\alpha \boldsymbol{y}^0 \big) \otimes \boldsymbol{a}^\alpha\,,
\end{equation}

The bending--curvature  tensor $\boldsymbol{K}^e$ in  material representation is given by \cite{Eremeyev06,Pietraszkiewicz-book04,Birsan-Neff-JElast-2013}
\begin{equation}\label{2}
    \boldsymbol{K}^e= \boldsymbol{Q}^{e,T}\text{axl}(\partial_\alpha \boldsymbol{Q}^e  \boldsymbol{Q}^{e,T}) \otimes \boldsymbol{a}^\alpha= \text{axl}(\boldsymbol{Q}^{e,T} \partial_\alpha \boldsymbol{Q}^e ) \otimes \boldsymbol{a}^\alpha ,
\end{equation}
We can represent $\boldsymbol{K}^e$ in terms of $\boldsymbol{R}$ and $\boldsymbol{Q}^0$ in the form
\begin{equation}\label{3}
    \boldsymbol{K}^e= \boldsymbol{Q}^{0} \big[ \text{axl}(\boldsymbol{R}^T \partial_\alpha \boldsymbol{R}  )- \text{axl}(\boldsymbol{Q}^{0,T}\partial_\alpha \boldsymbol{Q}^{0})\big] \otimes \boldsymbol{a}^\alpha\,,
\end{equation}
or equivalently
\begin{equation}\label{4}
\begin{array}{c}
    \boldsymbol{K}^e=\boldsymbol{K} - \boldsymbol{K}^0,\qquad \text{with} \qquad \boldsymbol{K}= \boldsymbol{Q}^{0}   \text{axl}(\boldsymbol{R}^T \partial_\alpha \boldsymbol{R}  ) \otimes \boldsymbol{a}^\alpha\,,\vspace{4pt} \\
      \boldsymbol{K}^0= \boldsymbol{Q}^{0}  \text{axl}(\boldsymbol{Q}^{0,T}\partial_\alpha \boldsymbol{Q}^{0})  \otimes \boldsymbol{a}^\alpha\,\,=\,\,\text{axl}(\partial_\alpha \boldsymbol{Q}^{0}\,\boldsymbol{Q}^{0,T}) \otimes \boldsymbol{a}^\alpha\,.
    \end{array}
\end{equation}
It is useful to express the tensors $\boldsymbol{E}^e$ and $\boldsymbol{K}^e$ decomposed in the tensor basis $\{\boldsymbol{d}_i^0\otimes\boldsymbol{a}^\alpha\}$. Thus, we obtain
\begin{equation}\label{5}
    \begin{array}{l}
    \boldsymbol{E}^e=[(\partial_\alpha \boldsymbol{y}\cdot\boldsymbol{d}_i)\boldsymbol{d}_i^0 -\boldsymbol{a}_\alpha]\otimes\boldsymbol{a}^\alpha,\vspace{4pt}\\
    \boldsymbol{K}=[(\partial_\alpha \boldsymbol{d}_2\cdot\boldsymbol{d}_3)\boldsymbol{d}_1^0
    +(\partial_\alpha \boldsymbol{d}_3\cdot\boldsymbol{d}_1)\boldsymbol{d}_2^0
    +(\partial_\alpha \boldsymbol{d}_1\cdot\boldsymbol{d}_2)\boldsymbol{d}_3^0\,] \otimes\boldsymbol{a}^\alpha,\vspace{4pt}\\
     \boldsymbol{K}^0=[(\partial_\alpha \boldsymbol{d}_2^0\cdot\boldsymbol{d}_3^0)\boldsymbol{d}_1^0
    +(\partial_\alpha \boldsymbol{d}_3^0\cdot\boldsymbol{d}_1^0)\boldsymbol{d}_2^0
    +(\partial_\alpha \boldsymbol{d}_1^0\cdot\boldsymbol{d}_2^0)\boldsymbol{d}_3^0\,] \otimes\boldsymbol{a}^\alpha.
    \end{array}
\end{equation}
Thus, the tensor components are
\begin{equation}\label{5,5}
    \begin{array}{l}
    \boldsymbol{E}^e=E_{i\alpha}^e\boldsymbol{d}_i^0 \otimes\boldsymbol{a}^\alpha= (\partial_\alpha \boldsymbol{y}\cdot\boldsymbol{d}_i -\boldsymbol{a}_\alpha\cdot\boldsymbol{d}_i^0)\,\boldsymbol{d}_i^0 \otimes\boldsymbol{a}^\alpha, \vspace{4pt}\\
       \boldsymbol{K}^e=K_{i\alpha}^e\boldsymbol{d}_i^0 \otimes\boldsymbol{a}^\alpha=
        \frac{1}{2}\,e_{ijk}(\partial_\alpha \boldsymbol{d}_j\cdot\boldsymbol{d}_k- \partial_\alpha \boldsymbol{d}_j^0\cdot\boldsymbol{d}_k^0)\,\boldsymbol{d}_i^0 \otimes\boldsymbol{a}^\alpha\vspace{4pt}\\
       \quad\,\,\,\,\,
       = (\partial_\alpha \boldsymbol{d}_2\cdot\boldsymbol{d}_3- \partial_\alpha \boldsymbol{d}_2^0\cdot\boldsymbol{d}_3^0)\,\boldsymbol{d}_1^0 \otimes\boldsymbol{a}^\alpha
       + (\partial_\alpha \boldsymbol{d}_3\cdot\boldsymbol{d}_1- \partial_\alpha \boldsymbol{d}_3^0\cdot\boldsymbol{d}_1^0)\,\boldsymbol{d}_2^0 \otimes\boldsymbol{a}^\alpha
       \vspace{4pt}\\
       \qquad\quad+(\partial_\alpha \boldsymbol{d}_1\cdot\boldsymbol{d}_2- \partial_\alpha \boldsymbol{d}_1^0\cdot\boldsymbol{d}_2^0)\,\boldsymbol{d}_3^0 \otimes\boldsymbol{a}^\alpha,
    \end{array}
\end{equation}
where $e_{ijk}$ is the three-dimensional alternator (i.e. the signature of the permutation $(1,2,3)\,\rightarrow\,(i,j,k)$).

Let the tensors $\boldsymbol{N}$ and $\boldsymbol{M}$ be  the internal surface stress resultant and stress couple tensors of the 1$^{st}$ Piola--Kirchhoff type for the shell.
The equilibrium equations for 6--parameter shells are \cite{Pietraszkiewicz-book04,Eremeyev06,Birsan-Neff-MMS-2013}
\begin{equation}\label{6}
\begin{array}{l}
    \mathrm{Div}_s\, \boldsymbol{N}+\boldsymbol{f}=\boldsymbol{0},\vspace{4pt}\\
    \mathrm{Div}_s\, \boldsymbol{M} + \mathrm{axl}(\boldsymbol{N}\boldsymbol{F}^T-\boldsymbol{F}\boldsymbol{N}^T)
    +\boldsymbol{l}=\boldsymbol{0},
    \end{array}
\end{equation}
where $\boldsymbol{f}$ and $\boldsymbol{l}$  are the external surface resultant force and couple vectors applied to points of $S$, but measured per unit area of $S^0\,$. Here
$\mathrm{Div}_s$ denotes the surface divergence  and  $\,\mathrm{axl}(\,\cdot\,)$ represents the axial vector of a skew--symmetric tensor. To formulate the boundary--value problem, we consider boundary conditions of the type \cite{Pietraszkiewicz11}
\begin{equation}\label{7}
\begin{array}{l}
\boldsymbol{N}\boldsymbol{\nu}=\boldsymbol{n}^*,\qquad \boldsymbol{M}\boldsymbol{\nu}=\boldsymbol{m}^*\qquad\mathrm{along}\,\,\,\partial S^0_f\,, \vspace{4pt}\\
    \quad\boldsymbol{y}=\boldsymbol{y}^* ,\qquad\quad\,\, \boldsymbol{R}=\boldsymbol{R}^* \qquad\mathrm{along}\,\,\,\partial S^0_d\,,
    \end{array}
\end{equation}
where $\boldsymbol{\nu}$ is the external unit normal vector to the boundary curve $\partial S^0$ (lying in the tangent plane) and $\{ \partial S^0_f\,, \partial S^0_d\,\}$ is a disjoint partition of $\partial S^0\,$.

Let $W=W(\boldsymbol{E}^e,\boldsymbol{K}^e)$ be  the strain energy density of the elastic shell,  measured per unit area of the base surface $S^0\,$. The principle of energy can be written in the form \cite{Zhilin06}
\begin{equation}\label{8}
    \dot W = (\boldsymbol{Q}^{e,T}\boldsymbol{N})\cdot\dot{\boldsymbol{E}^e} + (\boldsymbol{Q}^{e,T}\boldsymbol{M})\cdot\dot{\boldsymbol{K}^e},
\end{equation}
where a superposed dot designates the material time derivative and $\,\cdot\,$ means the scalar product of tensors, i.e. $\boldsymbol{S}\cdot\boldsymbol{V}=\mathrm{tr}(\boldsymbol{S}^T\boldsymbol{V})$.
Similar relations expressing the internal virtual power and the principle of virtual work have been presented in \cite{Pietraszkiewicz04,Eremeyev06}. Under the hyperelasticity assumption, $\boldsymbol{N}$ and $\boldsymbol{M}$ are expressed by the constitutive equations
\begin{equation}\label{9}
    \boldsymbol{N}=\boldsymbol{Q}^e\,\dfrac{\partial\, W}{\partial \boldsymbol{E}^e}\,\,,\qquad \boldsymbol{M}=\boldsymbol{Q}^e\,\dfrac{\partial\, W}{\partial \boldsymbol{K}^e}\,\,.
\end{equation}

To resume, the  boundary--value problem for non-linear elastic 6--parameter shells consists of the equations \eqref{1}, \eqref{2}, \eqref{6}, \eqref{7}, and \eqref{9}. The minimization problem associated to the deformation of elastic shells can be put in the following form: find the pair $(\hat{\boldsymbol{y}},\hat{\boldsymbol{R}})$ in the admissible set $\mathcal{A}$ which realizes the minimum of the functional
\begin{equation*}
I(\boldsymbol{y},\boldsymbol{R})=\int_{S^0} W(\boldsymbol{E}^e,\boldsymbol{K}^e)\,\mathrm{d}S - \Lambda(\boldsymbol{y},\boldsymbol{R})\qquad\mathrm{for}\qquad (\boldsymbol{y},\boldsymbol{R})\in \mathcal{A},
\end{equation*}
where d$S$ is the area element of the surface $S^0\, $ and  $\Lambda(\boldsymbol{y},\boldsymbol{R})$ is a function representing the potential of external surface loads $\boldsymbol{f}$, $\boldsymbol{l}$, and boundary loads $\boldsymbol{n}^*$, $\boldsymbol{m}^*$  \cite{Pietraszkiewicz04,Birsan-Neff-MMS-2013}.
The admissible set   is
\begin{equation*}
    \mathcal{A}=\big\{(\boldsymbol{y},\boldsymbol{R})\in\boldsymbol{H}^1(\omega, \mathbb{R}^3)\times\boldsymbol{H}^1(\omega, \mathrm{SO}(3))\,\,\big|\,\,\,  \boldsymbol{y}_{\big| \partial S^0_d}=\boldsymbol{y}^*, \,\,\boldsymbol{R}_{\big| \partial S^0_d}=\boldsymbol{R}^* \big\},
\end{equation*}
where $\boldsymbol{H}^1$ designates the well-known Sobolev space and the boundary conditions are to be understood in the sense of traces.

We remark that the 6--parameter model of shells takes into account also the deformations caused by the so-called \emph{drilling rotations}. The drilling rotation in a point of $S$ is interpreted as the rotation about the director $\boldsymbol{d}_3\,$.

In what follows, we  describe the shells which are insensible to drilling rotations, in the framework of the 6--parameter theory. This class of shells is important since it is widely used in applications.

\section{Characterization of shells without drilling rotations}\label{sec3}

For shells completely without drilling rotations the strain energy $W$ must be insensible  to the rotations  about $\boldsymbol{d}_3 \,$. Let $\boldsymbol{R}_\theta $ denote the rotation of angle $\theta(x_1,x_2)$ about $\boldsymbol{d}_3 \,$. The general form of $\boldsymbol{R}_\theta $ may be given by
\begin{equation*}
    \begin{array}{l}
    \boldsymbol{R}_\theta=\boldsymbol{d}_3\otimes\boldsymbol{d}_3+ \cos \theta(\boldsymbol{1}-\boldsymbol{d}_3\otimes\boldsymbol{d}_3)+\sin\theta (\boldsymbol{d}_3\times \boldsymbol{1}),
\end{array}
\end{equation*}
or equivalently, written in the $\{\boldsymbol{d}_i\otimes\boldsymbol{d}_j\}$ tensor basis,
\begin{equation}\label{10}
    \begin{array}{c}
      \boldsymbol{R}_\theta= \cos \theta( \boldsymbol{d}_1\otimes\boldsymbol{d}_1+ \boldsymbol{d}_2\otimes\boldsymbol{d}_2 )+\sin\theta (\boldsymbol{d}_2\otimes\boldsymbol{d}_1- \boldsymbol{d}_1\otimes\boldsymbol{d}_2) + \boldsymbol{d}_3\otimes\boldsymbol{d}_3\,.
    \end{array}
\end{equation}
In other words, the shells without drilling rotations are characterized by the property that $ W( {\boldsymbol{E}^e}, {\boldsymbol{K}^e})$   remains invariant under the superposition of arbitrary rotations angle $\theta(x_1,x_2)$ about $\boldsymbol{d}_3 \,$, i.e. invariant under the transformation
\begin{equation}\label{11}
    \begin{array}{c}
      \boldsymbol{Q}^e\,\,\,\longrightarrow\,\,\, \boldsymbol{R}_\theta \boldsymbol{Q}^e\,\,.
\end{array}
\end{equation}
Taking into account the relations \eqref{1}, \eqref{2}, and \eqref{11}, we see that the strain and bending-curvature measures ${\boldsymbol{E}^e}$ and $ {\boldsymbol{K}^e}$ transform as
\begin{equation}\label{12}
    \begin{array}{l}
      \boldsymbol{E}^e\,\,\,\longrightarrow\,\,\, \boldsymbol{Q}^{e,T}\boldsymbol{R}_\theta^T\boldsymbol{F} - \boldsymbol{a} ,
      \vspace{4pt}\\
      \boldsymbol{K}^e \,\,\,\longrightarrow\,\,\,  \text{axl}[\boldsymbol{Q}^{e,T} \boldsymbol{R}_\theta^T \partial_\alpha (\boldsymbol{R}_\theta\boldsymbol{Q}^e )] \otimes \boldsymbol{a}^\alpha
    \end{array}
\end{equation}
In view of \eqref{1}, \eqref{2}, and \eqref{12}, the invariance condition on the strain function $W$ can be written as
\begin{equation}\label{13}
    \begin{array}{l}
       W( \boldsymbol{Q}^{e,T}\boldsymbol{F} - \boldsymbol{a} \,\,,\,\, \text{axl}(\boldsymbol{Q}^{e,T}  \partial_\alpha \boldsymbol{Q}^e ) \otimes \boldsymbol{a}^\alpha) \vspace{4pt}\\
       \qquad\qquad=
      W\big(\boldsymbol{Q}^{e,T}\boldsymbol{R}_\theta^T\boldsymbol{F} - \boldsymbol{a} \,\,,\,\,
       \text{axl}[\boldsymbol{Q}^{e,T} \boldsymbol{R}_\theta^T \partial_\alpha (\boldsymbol{R}_\theta\boldsymbol{Q}^e )] \otimes \boldsymbol{a}^\alpha\big)
    \end{array}
\end{equation}
for all rotation angles $\theta(x_1,x_2)$.

If we employ the tensors components $E_{i\alpha}^e$ and $K_{i\alpha}^e$ introduced in \eqref{5,5}, then the invariance condition \eqref{13} can be put into another form: we denote by $\boldsymbol{d}_i^\theta$ the directors rotated with angle $\theta$ about $\boldsymbol{d}_3$ in the configuration, i.e.
\begin{equation}\label{14}
    \begin{array}{c}
      \boldsymbol{d}_1^\theta= \boldsymbol{R}_\theta\boldsymbol{d}_1=\cos\theta\,\boldsymbol{d}_1+ \sin\theta\,\boldsymbol{d}_2\,,\qquad
    \boldsymbol{d}_2^\theta= \boldsymbol{R}_\theta\boldsymbol{d}_2=-\sin\theta\,\boldsymbol{d}_1+ \cos\theta\,\boldsymbol{d}_2\,,\vspace{4pt}\\
    \boldsymbol{d}_3^\theta= \boldsymbol{R}_\theta\boldsymbol{d}_3=\boldsymbol{d}_3\,,\qquad
    \boldsymbol{R}_\theta\boldsymbol{Q}^e= \boldsymbol{d}_i^\theta\otimes \boldsymbol{d}_i^0\,.
    \end{array}
\end{equation}
Then, the transformation \eqref{12} can be written with the help of the components \eqref{5,5} as follows
\begin{equation}\label{15}
    \begin{array}{l}
      E_{i\alpha}^e\,\boldsymbol{d}_i^0 \otimes\boldsymbol{a}^\alpha\,\,\,\longrightarrow\,\,\, (\partial_\alpha \boldsymbol{y}\cdot\boldsymbol{d}_i^\theta -\partial_\alpha \boldsymbol{y}^0 \cdot\boldsymbol{d}_i^0)\,\boldsymbol{d}_i^0 \otimes\boldsymbol{a}^\alpha ,
      \vspace{4pt}\\
      K_{i\alpha}^e\,\boldsymbol{d}_i^0 \otimes\boldsymbol{a}^\alpha \,\,\,\longrightarrow\,\,\,
      \frac{1}{2}\,e_{ijk}(\partial_\alpha \boldsymbol{d}_j^\theta\cdot \boldsymbol{d}_k^\theta- \partial_\alpha \boldsymbol{d}_j^0\cdot\boldsymbol{d}_k^0)\,\boldsymbol{d}_i^0 \otimes\boldsymbol{a}^\alpha
    \end{array}
\end{equation}
and the invariance condition \eqref{13} on the strain energy function $W$ takes the form
\begin{equation}\label{16}
    \begin{array}{c}
      \tilde W(E_{i\alpha}^e\,,\,K_{i\alpha}^e\,)= \tilde W\big( \partial_\alpha \boldsymbol{y}\cdot\boldsymbol{d}_i^\theta -\partial_\alpha \boldsymbol{y}^0 \cdot\boldsymbol{d}_i^0\,,\, \frac{1}{2}\,e_{ijk}(\partial_\alpha \boldsymbol{d}_j^\theta\cdot \boldsymbol{d}_k^\theta- \partial_\alpha \boldsymbol{d}_j^0\cdot\boldsymbol{d}_k^0)\,\big),
    \end{array}
\end{equation}
where the components $E_{i\alpha}^e$ and $K_{i\alpha}^e$  are given in \eqref{5,5} and the function $\tilde W$ is defined by
$\tilde W(E_{i\alpha}^e\,,\,K_{i\alpha}^e)= W(E_{i\alpha}^e\boldsymbol{d}_i^0\otimes \boldsymbol{a}^\alpha\,,\,K_{i\alpha}^e\boldsymbol{d}_i^0\otimes \boldsymbol{a}^\alpha)$.

We present now the main result, which gives a representation theorem for the strain energy function $W$ that satisfy the invariance condition \eqref{13}, i.e. for shells without drilling rotations.

\begin{theorem}
The strain energy function $W$ of the shell is invariant under the transformations \eqref{11} (i.e. it is insensible to drilling rotations) if and only if it can be represented as a function of the following arguments
\begin{equation}\label{17}
    \begin{array}{c}
      W =
      \hat W\big(\boldsymbol{F}^{ T}\boldsymbol{F} \,\,, \,\,
        \boldsymbol{F}^T \boldsymbol{d}_3 \,\,, \,\,  \boldsymbol{F}^T\mathrm{Grad}_s\boldsymbol{d}_3 \big).
\end{array}
\end{equation}
\end{theorem}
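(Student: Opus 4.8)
The plan is to prove both implications by passing to the scalar component form \eqref{16} and tracking how the components $E^e_{i\alpha}$ and $K^e_{i\alpha}$ behave under a drilling rotation with an \emph{arbitrary} angle field $\theta(x_1,x_2)$. The direction ``$\Leftarrow$'' is immediate: under the transformation \eqref{11} the surface deformation $\boldsymbol{y}$ is untouched, so $\boldsymbol{F}=\mathrm{Grad}_s\boldsymbol{y}$ and hence $\boldsymbol{F}^T\boldsymbol{F}$ are unchanged; moreover $\boldsymbol{R}_\theta$ fixes the third director, $\boldsymbol{d}_3^\theta=\boldsymbol{d}_3$ by \eqref{14}, so $\boldsymbol{d}_3$ and $\mathrm{Grad}_s\boldsymbol{d}_3$ are invariant, and therefore so are $\boldsymbol{F}^T\boldsymbol{d}_3$ and $\boldsymbol{F}^T\mathrm{Grad}_s\boldsymbol{d}_3$. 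Hence any $\hat W$ of the three arguments in \eqref{17} automatically satisfies the invariance condition \eqref{13}.

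For ``$\Rightarrow$'' I would introduce the planar $2$-vectors $\boldsymbol{v}_\alpha=(\partial_\alpha\boldsymbol{y}\cdot\boldsymbol{d}_1,\,\partial_\alpha\boldsymbol{y}\cdot\boldsymbol{d}_2)$ and $\boldsymbol{w}_\alpha=(\partial_\alpha\boldsymbol{d}_2\cdot\boldsymbol{d}_3,\,\partial_\alpha\boldsymbol{d}_3\cdot\boldsymbol{d}_1)$, together with the scalars $\partial_\alpha\boldsymbol{y}\cdot\boldsymbol{d}_3$. A short computation based on \eqref{14} and the orthonormality of the triad shows that, under the drilling rotation, $\partial_\alpha\boldsymbol{y}\cdot\boldsymbol{d}_3$ is invariant, both $\boldsymbol{v}_\alpha$ and $\boldsymbol{w}_\alpha$ are carried by one and the same planar rotation of angle $\theta$, while the drilling curvature obeys the pure shift $\partial_\alpha\boldsymbol{d}_1^\theta\cdot\boldsymbol{d}_2^\theta=\partial_\alpha\boldsymbol{d}_1\cdot\boldsymbol{d}_2+\partial_\alpha\theta$. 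Since at any fixed point the value $\theta$ and the gradient $\partial_\alpha\theta$ can be prescribed independently (e.g. by an affine $\theta$), taking $\theta=0$ there with $\partial_\alpha\theta$ arbitrary forces $W$ to be independent of the components $K^e_{3\alpha}$; the residual freedom is then the one-parameter group of \emph{simultaneous} planar rotations of $\boldsymbol{v}_1,\boldsymbol{v}_2,\boldsymbol{w}_1,\boldsymbol{w}_2$ with $\partial_\alpha\boldsymbol{y}\cdot\boldsymbol{d}_3$ held fixed.

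At this stage I would invoke the classical description of the invariants of a finite family of planar vectors under $\mathrm{SO}(2)$: a complete functional basis is furnished by their mutual scalar products and their oriented areas. Matching these with the quantities read off from \eqref{0,5} and \eqref{5,5} reproduces exactly the three arguments of \eqref{17}: the scalar products $\boldsymbol{v}_\alpha\cdot\boldsymbol{v}_\beta=(\boldsymbol{F}^T\boldsymbol{F})_{\alpha\beta}-(\partial_\alpha\boldsymbol{y}\cdot\boldsymbol{d}_3)(\partial_\beta\boldsymbol{y}\cdot\boldsymbol{d}_3)$ together with the scalars $\partial_\alpha\boldsymbol{y}\cdot\boldsymbol{d}_3=(\boldsymbol{F}^T\boldsymbol{d}_3)_\alpha$ encode the first two arguments, while the four oriented areas $\boldsymbol{v}_\beta\times\boldsymbol{w}_\alpha=\partial_\beta\boldsymbol{y}\cdot\partial_\alpha\boldsymbol{d}_3=(\boldsymbol{F}^T\mathrm{Grad}_s\boldsymbol{d}_3)_{\beta\alpha}$ give the third. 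Thus every drilling invariant, and in particular an invariant $W$, is a function of the three tensors in \eqref{17}.

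I expect the main obstacle to be the completeness (orbit-separation) step, namely the claim that these nine scalars determine the configuration up to a genuine drilling rotation and not merely up to an orthogonal transformation. When $\boldsymbol{v}_1,\boldsymbol{v}_2$ are linearly independent the three scalar products fix them up to a planar rotation, the sign of the area being pinned by orientation via $\boldsymbol{v}_1\times\boldsymbol{v}_2=(\partial_1\boldsymbol{y}\times\partial_2\boldsymbol{y})\cdot\boldsymbol{d}_3=+\sqrt{\det(\boldsymbol{v}_\alpha\cdot\boldsymbol{v}_\beta)}>0$ for an admissible orientation-preserving deformation; each $\boldsymbol{w}_\alpha$ is then recovered uniquely from the two areas $\boldsymbol{v}_1\times\boldsymbol{w}_\alpha,\ \boldsymbol{v}_2\times\boldsymbol{w}_\alpha$. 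Consequently the remaining invariants $\boldsymbol{v}_\alpha\cdot\boldsymbol{w}_\beta$ and $\boldsymbol{w}_\alpha\cdot\boldsymbol{w}_\beta$ are themselves functions of the listed nine, the orbits are separated, and the representation \eqref{17} follows. The degenerate locus where $\boldsymbol{v}_1,\boldsymbol{v}_2$ are dependent would be dealt with by a continuity argument, the orientation constraint being precisely what excludes the spurious reflection that would otherwise break the representation.
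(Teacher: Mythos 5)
Your proposal is correct in the regime where the theorem actually holds, but it takes a genuinely different route from the paper. The paper works infinitesimally: differentiating the invariance identity \eqref{16} with respect to $\partial_\alpha\theta$ and $\theta$ produces the linear first-order PDE \eqref{22}--\eqref{23} for $W$, which is then solved by the method of characteristics, the arguments of $\hat W$ in \eqref{17} arising as the first integrals $\boldsymbol{U}_1,\boldsymbol{U}_2,\boldsymbol{U}_3$ of the system \eqref{24}, with $\boldsymbol{U}_4=\boldsymbol{n}^0\boldsymbol{K}^e$ eliminated by \eqref{19}. You instead use the finite group action: after checking that the planar vectors $\boldsymbol{v}_\alpha,\boldsymbol{w}_\alpha$ rotate by one common planar rotation while $K^e_{3\alpha}$ undergoes a pure shift by $\partial_\alpha\theta$ (both computations are correct), you kill the $K^e_{3\alpha}$-dependence by taking $\theta=0$ with $\partial_\alpha\theta$ arbitrary --- the finite counterpart of \eqref{19} --- and then invoke $\mathrm{SO}(2)$-invariant theory (dot products and oriented areas) plus an explicit orbit-separation argument. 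Your nine scalars are exactly the paper's first integrals: $\boldsymbol{v}_\alpha\cdot\boldsymbol{v}_\beta+t_\alpha t_\beta=(\boldsymbol{F}^T\boldsymbol{F})_{\alpha\beta}$, $t_\alpha=(\boldsymbol{F}^T\boldsymbol{d}_3)_\alpha$, and $\boldsymbol{v}_\beta\times\boldsymbol{w}_\alpha=(\boldsymbol{F}^T\mathrm{Grad}_s\boldsymbol{d}_3)_{\beta\alpha}$. What your route buys: it never differentiates $W$, so it covers merely continuous energies, and it forces the orbit-separation question into the open, which the paper's appeal to the general solution of \eqref{23} (a local statement that also needs functional independence of the first integrals) leaves implicit. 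What the paper's route buys: a mechanical computation with no case distinctions, which generalizes directly to alternative integrals such as $\boldsymbol{U}_5$ in Remark \ref{rem2}.

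One concrete caution: your fallback ``continuity argument'' on the degenerate locus cannot be made to work, because there the representation genuinely fails. Take a flat reference configuration, a deformation with $\partial_1\boldsymbol{y}=\boldsymbol{d}_3$ and $\partial_2\boldsymbol{y}=\boldsymbol{d}_1$ (so $\boldsymbol{v}_1=\boldsymbol{0}$ while $\boldsymbol{F}$ still has rank two), and compare the constant director field with one for which, at the point considered, $\partial_1\boldsymbol{d}_2\cdot\boldsymbol{d}_3=1$ and all other curvature components vanish. Both configurations have identical $\boldsymbol{F}^T\boldsymbol{F}$, $\boldsymbol{F}^T\boldsymbol{d}_3$ and $\boldsymbol{F}^T\mathrm{Grad}_s\boldsymbol{d}_3$, yet the drilling-invariant function $\|\boldsymbol{a}\boldsymbol{K}^e\|^2$ takes the values $0$ and $1$ on them; moreover the discrepancy survives any limit from the nondegenerate set, so no continuous extension of $\hat W$ can repair it. The theorem is therefore only valid generically, on the open set where the planar part of $\boldsymbol{Q}^{e,T}\boldsymbol{F}$ is invertible with fixed sign of its determinant --- precisely your orientation condition $(\partial_1\boldsymbol{y}\times\partial_2\boldsymbol{y})\cdot\boldsymbol{d}_3>0$. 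Since the paper's characteristics argument carries the same unacknowledged restriction, this is a limitation of the statement rather than of your method; your proof in fact has the merit of making the unavoidable side conditions visible, and it would be complete if you replaced the continuity claim by the explicit hypothesis of nondegeneracy and fixed orientation.
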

\begin{proof}
We observe first that any function $W$ which admits a representation of the type \eqref{17} is invariant under the transformations \eqref{11}. Indeed, the vectors $\boldsymbol{y}$ and $\boldsymbol{d}_3$ are invariant under rotations about $\boldsymbol{d}_3\,$. Therefore, the tensors $\boldsymbol{F}=\mathrm{Grad}_s\boldsymbol{y}$ and $\mathrm{Grad}_s\boldsymbol{d}_3$ are also invariant under rotations \eqref{10} and consequently, any function $\hat W$ of the combinations $(\boldsymbol{F}^{ T}\boldsymbol{F}  \,, \,         \boldsymbol{F}^T \boldsymbol{d}_3 \, ,  \,  \boldsymbol{F}^T\mathrm{Grad}_s\boldsymbol{d}_3 \big)$ is invariant under the transformation \eqref{11}.

Conversely, let us prove that any strain energy function $W$ satisfying this invariance condition admits necessarily a representation of the form \eqref{17}. We assume that $W$ fulfills the invariance relation \eqref{13}, which is equivalent to \eqref{16}. In order to write \eqref{16} in a more convenient form, we calculate the scalar products
\begin{equation*}
    \begin{array}{l}
      \partial_\alpha \boldsymbol{d}_2^\theta\cdot \boldsymbol{d}_3^\theta = \boldsymbol{d}_3 \cdot\partial_\alpha \boldsymbol{d}_2^\theta\,,\vspace{3pt}\\
      \partial_\alpha \boldsymbol{d}_3^\theta\cdot \boldsymbol{d}_1^\theta =- \boldsymbol{d}_3^\theta\cdot
      \partial_\alpha \boldsymbol{d}_1^\theta  = - \boldsymbol{d}_3\cdot
      \partial_\alpha \boldsymbol{d}_1^\theta\,,\vspace{3pt}\\
       \partial_\alpha \boldsymbol{d}_1^\theta\cdot \boldsymbol{d}_2^\theta =  \big[
       (\cos\theta\,\partial_\alpha\boldsymbol{d}_1+ \sin\theta\,\partial_\alpha\boldsymbol{d}_2)+(-\sin\theta\,\boldsymbol{d}_1+ \cos\theta\,\boldsymbol{d}_2)\partial_\alpha\theta\big] \!\cdot \! (-\sin\theta\,\boldsymbol{d}_1+ \cos\theta\,\boldsymbol{d}_2)\vspace{3pt}\\
       \qquad\qquad\,= \partial_\alpha \boldsymbol{d}_1\cdot \boldsymbol{d}_2 + \partial_\alpha \theta\,.
    \end{array}
\end{equation*}
Using relations of this type we can put the condition \eqref{16} in the following form
\begin{equation}\label{18}
    \begin{array}{r}
       \tilde W(E_{i\alpha}^e\,,\,K_{1\alpha}^e\,,\,K_{2\alpha}^e\,,\,K_{3\alpha}^e\,)= \tilde W\big( \partial_\alpha \boldsymbol{y}\cdot\boldsymbol{d}_i^\theta -\boldsymbol{a}_\alpha \cdot\boldsymbol{d}_i^0\,,\,
       \partial_\alpha \boldsymbol{d}_2^\theta\cdot \boldsymbol{d}_3- \partial_\alpha \boldsymbol{d}_2^0\cdot\boldsymbol{n}^0\,,\,
         \vspace{3pt}\\
         \qquad\qquad\,-\partial_\alpha \boldsymbol{d}_1^\theta\cdot \boldsymbol{d}_3+ \partial_\alpha \boldsymbol{d}_1^0\cdot\boldsymbol{n}^0\,,\, \partial_\alpha \theta+ \partial_\alpha \boldsymbol{d}_1\cdot \boldsymbol{d}_2 -  \partial_\alpha \boldsymbol{d}_1^0\cdot \boldsymbol{d}_2^0\,\big),
    \end{array}
\end{equation}
where $\boldsymbol{d}_i^\theta$ are expressed by \eqref{14}. Relation \eqref{18} must hold for all angles of rotation $\theta=\theta(x_1,x_2)$. Since the left-hand side is independent of $\theta$ and $\partial_\alpha\theta$, it follows that the derivatives of the right-hand side with respect to $\theta$ and $\partial_\alpha\theta$ are zero.
Thus, taking the derivative of \eqref{18} with respect to  $\partial_\alpha\theta$ we obtain by the chain rule
\begin{equation}\label{19}
    \dfrac{\partial\tilde W}{\partial \,K_{3\alpha}^e}\,= {0}, \qquad\mathrm{or\,\,equivalently}\qquad
    \dfrac{\partial\tilde W}{\partial(\boldsymbol{n}^0 \boldsymbol{K}^e)}\,=\boldsymbol{0},
\end{equation}
since $\boldsymbol{n}^0\boldsymbol{K}^e=\boldsymbol{d}_3^0(K_{i\alpha}^e\,\boldsymbol{d}_i^0\otimes \boldsymbol{a}^\alpha)=K_{3\alpha}\boldsymbol{a}^\alpha$. If we differentiate the relation \eqref{18} with respect to $\theta$ and use the relations
\begin{equation*}
    \begin{array}{c}
      \dfrac{\mathrm{d}}{\mathrm{d}\theta}(\boldsymbol{d}_1^\theta)=\boldsymbol{d}_2^\theta\,, \qquad \dfrac{\mathrm{d}}{\mathrm{d}\theta}(\boldsymbol{d}_2^\theta)=-\boldsymbol{d}_1^\theta\,, \vspace{3pt}\\
      \dfrac{\mathrm{d}}{\mathrm{d}\theta}(\partial_\alpha \boldsymbol{d}_2^\theta\cdot \boldsymbol{d}_3)=- \partial_\alpha \boldsymbol{d}_1^\theta\cdot \boldsymbol{d}_3\,, \qquad \dfrac{\mathrm{d}}{\mathrm{d}\theta}(\partial_\alpha \boldsymbol{d}_1^\theta\cdot \boldsymbol{d}_3)=\partial_\alpha \boldsymbol{d}_2^\theta\cdot \boldsymbol{d}_3 \,,
    \end{array}
\end{equation*}
then we get
\begin{equation}\label{20}
    \begin{array}{l}
      \dfrac{\partial\tilde W}{\partial \,E_{1\alpha}^e}\,\cdot(\partial_\alpha \boldsymbol{y}\cdot\boldsymbol{d}_2^\theta)+  \dfrac{\partial\tilde W}{\partial \,E_{2\alpha}^e}\,\cdot(-\partial_\alpha \boldsymbol{y}\cdot\boldsymbol{d}_1^\theta)
       \vspace{3pt}\\
       \qquad+
       \dfrac{\partial\tilde W}{\partial \,K_{1\alpha}^e}\,\cdot(-\partial_\alpha \boldsymbol{d}_1^\theta\cdot\boldsymbol{d}_3)+ \dfrac{\partial\tilde W}{\partial \,K_{2\alpha}^e}\,\cdot(-\partial_\alpha \boldsymbol{d}_2^\theta\cdot\boldsymbol{d}_3)=0.
    \end{array}
\end{equation}
Inserting \eqref{14} into \eqref{20} we get the equivalent form
\begin{equation}\label{21}
    \begin{array}{l}
      \sin\theta\,\Big[ \dfrac{\partial\tilde W}{\partial \,E_{1\alpha}^e}\,\cdot(\partial_\alpha \boldsymbol{y}\cdot\boldsymbol{d}_1)+  \dfrac{\partial\tilde W}{\partial \,E_{2\alpha}^e}\,\cdot(\partial_\alpha \boldsymbol{y}\cdot\boldsymbol{d}_2)
       +
       \dfrac{\partial\tilde W}{\partial \,K_{1\alpha}^e}\,\cdot(\partial_\alpha \boldsymbol{d}_2\cdot\boldsymbol{d}_3)
        \vspace{3pt}\\+ \dfrac{\partial\tilde W}{\partial \,K_{2\alpha}^e}\,\cdot(-\partial_\alpha \boldsymbol{d}_1\cdot\boldsymbol{d}_3)\Big]
       +
              \cos\theta\, \Big[\dfrac{\partial\tilde W}{\partial \,E_{1\alpha}^e}\,\cdot(-\partial_\alpha \boldsymbol{y}\cdot\boldsymbol{d}_2)+  \dfrac{\partial\tilde W}{\partial \,E_{2\alpha}^e}\,\cdot(\partial_\alpha \boldsymbol{y}\cdot\boldsymbol{d}_1)
        \vspace{3pt}\\+
       \dfrac{\partial\tilde W}{\partial \,K_{1\alpha}^e}\,\cdot(\partial_\alpha \boldsymbol{d}_1\cdot\boldsymbol{d}_3)+ \dfrac{\partial\tilde W}{\partial \,K_{2\alpha}^e}\,\cdot(\partial_\alpha \boldsymbol{d}_2\cdot\boldsymbol{d}_3) \Big]=0,
    \end{array}
\end{equation}
which must hold for every angle  $\theta=\theta(x_1,x_2)$. Thus, both square brackets in \eqref{21} must be zero. We show next that the relation (21) implies the equation
\begin{equation}\label{22}
    \dfrac{\partial W}{\partial \boldsymbol{E}^e}\,\cdot(\boldsymbol{c}\,\boldsymbol{Q}^{e,T}\boldsymbol{F})+ \dfrac{\partial W}{\partial \boldsymbol{K}^e}\,\cdot(\boldsymbol{c}\boldsymbol{K})=0,
\end{equation}
where $\,\cdot\,$ denotes the scalar product of tensors and $\boldsymbol{c}$ is defined in \eqref{0,5}. Indeed, we have
\begin{equation*}
    \begin{array}{l}
      \dfrac{\partial W}{\partial \boldsymbol{E}^e}\,= \dfrac{\partial\tilde W}{\partial \,E_{i\alpha}^e}\,\,\boldsymbol{d}_i^0\otimes \boldsymbol{a}_\alpha\,,\qquad \dfrac{\partial W}{\partial \boldsymbol{K}^e}\,= \dfrac{\partial\tilde W}{\partial \,K_{i\alpha}^e}\,\,\boldsymbol{d}_i^0\otimes \boldsymbol{a}_\alpha\,,
      \vspace{4pt}\\
      \boldsymbol{c}\,\boldsymbol{Q}^{e,T}\boldsymbol{F}= (\epsilon_{\alpha\beta}\boldsymbol{d}_\alpha^0\otimes \boldsymbol{d}_\beta^0 ) (\boldsymbol{d}_i^0\otimes\boldsymbol{d}_i) (\partial_\gamma \boldsymbol{y}\otimes\boldsymbol{a}^\gamma) = \epsilon_{\alpha\beta} (\boldsymbol{d}_\alpha^0\otimes\boldsymbol{d}_\beta) (\partial_\gamma \boldsymbol{y}\otimes\boldsymbol{a}^\gamma)
      \vspace{3pt}\\
      \qquad\qquad =
       \epsilon_{\alpha\beta} (\partial_\gamma \boldsymbol{y}\cdot\boldsymbol{d}_\beta)  \boldsymbol{d}_\alpha^0\otimes\boldsymbol{a}^\gamma=
       (\partial_\alpha \boldsymbol{y}\cdot\boldsymbol{d}_2)  \boldsymbol{d}_1^0\otimes\boldsymbol{a}^\alpha -(\partial_\alpha \boldsymbol{y}\cdot\boldsymbol{d}_1)  \boldsymbol{d}_2^0\otimes\boldsymbol{a}^\alpha,
      \vspace{4pt}\\
      \boldsymbol{c}\boldsymbol{K}= (\epsilon_{\alpha\beta}\boldsymbol{d}_\alpha^0\otimes \boldsymbol{d}_\beta^0 ) [(\partial_\gamma \boldsymbol{d}_2\cdot\boldsymbol{d}_3)\boldsymbol{d}_1^0
    +(\partial_\gamma \boldsymbol{d}_3\cdot\boldsymbol{d}_1)\boldsymbol{d}_2^0
    +(\partial_\gamma \boldsymbol{d}_1\cdot\boldsymbol{d}_2)\boldsymbol{d}_3^0\,] \otimes\boldsymbol{a}^\gamma
    \vspace{3pt}\\
    \qquad = -(\partial_\alpha \boldsymbol{d}_1\cdot\boldsymbol{d}_3) \boldsymbol{d}_1^0\otimes\boldsymbol{a}^\alpha -
    (\partial_\alpha \boldsymbol{d}_2\cdot\boldsymbol{d}_3) \boldsymbol{d}_2^0\otimes\boldsymbol{a}^\alpha
    \end{array}
\end{equation*}
and inserting this in the left-hand side of \eqref{22} we obtain the last square bracket in \eqref{21}, which is equal to zero.

Thus, the relation \eqref{22} holds true. In view of \eqref{1} and \eqref{4} we can express the relation \eqref{22} in terms of $\boldsymbol{E}^e$ and $\boldsymbol{K}^e$ in the form
\begin{equation}\label{23}
    \dfrac{\partial W}{\partial \boldsymbol{E}^e}\,\cdot\boldsymbol{c}(\boldsymbol{E}^e+\boldsymbol{a})+ \dfrac{\partial W}{\partial \boldsymbol{K}^e}\,\cdot\boldsymbol{c}(\boldsymbol{K}^e+\boldsymbol{K}^0)=0.
\end{equation}

We regard the relation \eqref{23} as a first order linear partial differential equation for the unknown function $W(\boldsymbol{E}^e,\boldsymbol{K}^e)$. The characteristic system attached to the differential equation \eqref{23} is (see e.g. \cite{Vrabie-2003}, Sect. 6.3)
\begin{equation}\label{24}
    \dfrac{\mathrm{d}\boldsymbol{E}^e}{\mathrm{d}s} = \boldsymbol{c}(\boldsymbol{E}^e+\boldsymbol{a}), \qquad \dfrac{\mathrm{d}\boldsymbol{K}^e}{\mathrm{d}s} = \boldsymbol{c}(\boldsymbol{K}^e+\boldsymbol{K}^0).
\end{equation}
Since the unknown function $W(\boldsymbol{E}^e,\boldsymbol{K}^e)$ depends in total on 12 independent scalar arguments (6 components of $\boldsymbol{E}^e$ and 6 components of $\boldsymbol{K}^e$ in the tensor basis $\{\boldsymbol{d}_i^0\otimes \boldsymbol{a}^\alpha\}$) it suffices to determine 11 independent first integrals of the system of ordinary differential equations \eqref{24}. Then, the general solution of the equation \eqref{22} can be represented as an arbitrary function of the 11 first integrals.

Let us introduce the functions $\boldsymbol{U}_1$ , $\boldsymbol{U}_2$ , $\boldsymbol{U}_3$ , and $\boldsymbol{U}_4$ given by
\begin{equation}\label{25}
    \begin{array}{l}
      \boldsymbol{U}_1=(\boldsymbol{E}^e+\boldsymbol{a})^T (\boldsymbol{E}^e+\boldsymbol{a}),\qquad
   \boldsymbol{U}_2=\boldsymbol{n}^0\boldsymbol{E}^e, \vspace{3pt}\\
   \boldsymbol{U}_3=(\boldsymbol{E}^e+\boldsymbol{a})^T\boldsymbol{c}\, (\boldsymbol{K}^e+\boldsymbol{K}^0),\qquad
    \boldsymbol{U}_4=\boldsymbol{n}^0\boldsymbol{K}^e.
    \end{array}
\end{equation}
We show that $\boldsymbol{U}_1$ , $\boldsymbol{U}_2$ , $\boldsymbol{U}_3$ ,  $\boldsymbol{U}_4$ represent first integrals of the system \eqref{24}. Indeed, taking into account $\boldsymbol{n}^0\boldsymbol{c}=\boldsymbol{0}\,$, $\,\boldsymbol{a}^T=\boldsymbol{a}$ ,  and $\,\,\boldsymbol{c}^T=-\boldsymbol{c}$ and \eqref{24} we find
\begin{equation*}
    \begin{array}{l}
       \dfrac{\mathrm{d}\boldsymbol{U}_1}{\mathrm{d}s} = \Big(\dfrac{\mathrm{d}\boldsymbol{E}^e}{\mathrm{d}s}\Big)^T
       (\boldsymbol{E}^e+\boldsymbol{a}) + (\boldsymbol{E}^{e,T}+\boldsymbol{a})\dfrac{\mathrm{d}\boldsymbol{E}^e}{\mathrm{d}s}
       \vspace{3pt}\\
       \qquad\quad =
       (\boldsymbol{E}^{e,T}+\boldsymbol{a})(-\boldsymbol{c})(\boldsymbol{E}^e+\boldsymbol{a})
       +(\boldsymbol{E}^{e,T}+\boldsymbol{a})\boldsymbol{c}\,(\boldsymbol{E}^e+\boldsymbol{a})
       =   \boldsymbol{0},
       \vspace{4pt}\\
    \dfrac{\mathrm{d}\boldsymbol{U}_2}{\mathrm{d}s} = \boldsymbol{n}^0 \,\, \dfrac{\mathrm{d}\boldsymbol{E}^e}{\mathrm{d}s}=
        \boldsymbol{n}^0 [\boldsymbol{c}(\boldsymbol{E}^e+\boldsymbol{a})] = \boldsymbol{0},
    \vspace{4pt}\\
    \dfrac{\mathrm{d}\boldsymbol{U}_3}{\mathrm{d}s} = \Big(\dfrac{\mathrm{d}\boldsymbol{E}^e}{\mathrm{d}s}\Big)^T
       \boldsymbol{c}(\boldsymbol{K}^e+\boldsymbol{K}^0)+ (\boldsymbol{E}^{e,T}+\boldsymbol{a})\boldsymbol{c}\, \dfrac{\mathrm{d}\boldsymbol{K}^e}{\mathrm{d}s}
       \vspace{3pt}\\
       \qquad\quad =
       (\boldsymbol{E}^{e,T}+\boldsymbol{a})(-\boldsymbol{c})\boldsymbol{c} \, (\boldsymbol{K}^e+\boldsymbol{K}^0)
       +(\boldsymbol{E}^{e,T}+\boldsymbol{a})\boldsymbol{c}\,\boldsymbol{c}\,
       (\boldsymbol{K}^e+\boldsymbol{K}^0)   =   \boldsymbol{0},
       \vspace{4pt}\\
       \dfrac{\mathrm{d}\boldsymbol{U}_4}{\mathrm{d}s} = \boldsymbol{n}^0 \,\, \dfrac{\mathrm{d}\boldsymbol{K}^e}{\mathrm{d}s}=    \boldsymbol{n}^0 [\boldsymbol{c}(\boldsymbol{K}^e+\boldsymbol{K}^0)] = \boldsymbol{0}.
     \end{array}
\end{equation*}
The functions $\boldsymbol{U}_1$ , $\boldsymbol{U}_2$ , $\boldsymbol{U}_3$ ,  $\boldsymbol{U}_4$ give in total 11 independent first integrals: 3 components of $\boldsymbol{U}_1$  (symmetric) and 4 components of $\boldsymbol{U}_3$ in the tensor basis $\{\boldsymbol{d}_\alpha^0\otimes \boldsymbol{a}^\beta\}$, 2 components of the vector   $\boldsymbol{U}_2$ and 2 components of the vector $\boldsymbol{U}_4$ in the   basis $\{ \boldsymbol{a}^1,\boldsymbol{a}^2 \}$.

According to the  theory of differential equations (see e.g. \cite{Vrabie-2003}, Sect. 6.1), the general solution of the partial differential equation \eqref{23} has the form
\begin{equation}\label{26}
    W(\boldsymbol{E}^e,\boldsymbol{K}^e)= \hat W(\boldsymbol{U}_1\,,\,\boldsymbol{U}_2\,,\,\boldsymbol{U}_3\,,\,\boldsymbol{U}_4).
\end{equation}
On the other hand, by virtue of \eqref{19} we have $\,\,\dfrac{\partial W}{\partial \boldsymbol{U}_4}\,=\boldsymbol{0}\,$ and from \eqref{26} we deduce that the energy function $W$ can be represented as
\begin{equation}\label{27}
     W(\boldsymbol{E}^e,\boldsymbol{K}^e)= \hat W(\boldsymbol{U}_1\,,\,\boldsymbol{U}_2\,,\,\boldsymbol{U}_3 ).
\end{equation}
Finally, by a straightforward calculation we obtain from \eqref{25} and \eqref{1}--\eqref{5} the relations
\begin{equation*}
    \begin{array}{l}
       \boldsymbol{U}_1=(\boldsymbol{E}^e+\boldsymbol{a})^T (\boldsymbol{E}^e+\boldsymbol{a})
       =(\boldsymbol{Q}^{e,T}\boldsymbol{F})^T(\boldsymbol{Q}^{e,T}\boldsymbol{F})= \boldsymbol{F}^T\boldsymbol{F}, \vspace{3pt}\\
      \boldsymbol{U}_2=\boldsymbol{n}^0\boldsymbol{E}^e= \boldsymbol{d}_3^0(\boldsymbol{Q}^{e,T}\partial_\alpha\boldsymbol{y}- \boldsymbol{a}_\alpha)\otimes\boldsymbol{a}^\alpha
      = \big[(\boldsymbol{Q}^e \boldsymbol{d}_3^0)\cdot\partial_\alpha\boldsymbol{y}\big] \boldsymbol{a}^\alpha
       = (\boldsymbol{d}_3 \cdot\partial_\alpha\boldsymbol{y}) \boldsymbol{a}^\alpha
       \vspace{3pt}\\
      \qquad
      = \boldsymbol{d}_3 (\partial_\alpha\boldsymbol{y}\otimes \boldsymbol{a}^\alpha)=
      \boldsymbol{F}^T\boldsymbol{d}_3\,,
      \vspace{3pt}\\
   \boldsymbol{U}_3=(\boldsymbol{E}^e+\boldsymbol{a})^T\boldsymbol{c}\, (\boldsymbol{K}^e+\boldsymbol{K}^0)= (\boldsymbol{Q}^{e,T}\boldsymbol{F})^T\boldsymbol{c}\, \boldsymbol{K} = \boldsymbol{F}^T\boldsymbol{Q}^{e}(\boldsymbol{d}_1^0\otimes \boldsymbol{d}_2^0-  \boldsymbol{d}_2^0\otimes \boldsymbol{d}_1^0)\boldsymbol{K}
   \vspace{3pt}\\
   \qquad = \boldsymbol{F}^T (\boldsymbol{d}_1\otimes \boldsymbol{d}_2^0-  \boldsymbol{d}_2\otimes \boldsymbol{d}_1^0) \big[(\partial_\alpha \boldsymbol{d}_2\cdot\boldsymbol{d}_3)\boldsymbol{d}_1^0
    +(\partial_\alpha \boldsymbol{d}_3\cdot\boldsymbol{d}_1)\boldsymbol{d}_2^0
    +(\partial_\alpha \boldsymbol{d}_1\cdot\boldsymbol{d}_2)\boldsymbol{d}_3^0\big] \otimes\boldsymbol{a}^\alpha
     \vspace{3pt}\\
   \qquad = \boldsymbol{F}^T \big[ (\partial_\alpha \boldsymbol{d}_3\cdot\boldsymbol{d}_1)\boldsymbol{d}_1
    +(\partial_\alpha \boldsymbol{d}_3\cdot\boldsymbol{d}_2)\boldsymbol{d}_2\big]  \otimes\boldsymbol{a}^\alpha =  \boldsymbol{F}^T \big[ (\boldsymbol{d}_1\otimes \boldsymbol{d}_1+  \boldsymbol{d}_2\otimes \boldsymbol{d}_2)\,\partial_\alpha \boldsymbol{d}_3\big]  \otimes\boldsymbol{a}^\alpha
    \vspace{3pt}\\
   \qquad = \boldsymbol{F}^T   (\boldsymbol{d}_i\otimes \boldsymbol{d}_i)(\partial_\alpha \boldsymbol{d}_3 \otimes\boldsymbol{a}^\alpha) =  \boldsymbol{F}^T  \,\mathrm{Grad}_s\boldsymbol{d}_3\,.
    \end{array}
\end{equation*}
Inserting these expressions for $\boldsymbol{U}_i$ in \eqref{27} we obtain that the representation \eqref{17} holds true. The proof is complete.
\end{proof}

\begin{remark}\label{rem1}
The values of the arguments of the function $\hat W$ in \eqref{17}, calculated in the reference configuration $S^0$, are
\begin{equation*}
    \begin{array}{c}
     \boldsymbol{F}^{ T}\boldsymbol{F}_{\big| S^0}= \boldsymbol{a}^T\boldsymbol{a}=\boldsymbol{a} , \qquad
        {\boldsymbol{F}^T \boldsymbol{d}_3}_{\big| S^0}= \boldsymbol{a}^T\boldsymbol{d}_3^0= \boldsymbol{0}\,,
         \vspace{4pt}\\
          {\boldsymbol{F}^T\mathrm{Grad}_s\boldsymbol{d}_3}_{\big| S^0}= \boldsymbol{a}^T\mathrm{Grad}_s\boldsymbol{d}_3^0= \mathrm{Grad}_s\boldsymbol{n}^0 = -\boldsymbol{b}\,.
\end{array}
\end{equation*}
Then, one can introduce the measures of deformation $\boldsymbol{\mathcal{E}}$ , $\boldsymbol{\gamma}$ , $\boldsymbol{\Psi}$ defined by
\begin{equation}\label{28}
    \begin{array}{l}
    \boldsymbol{\mathcal{E}}=\dfrac{1}{2}\,(\boldsymbol{F}^{ T}\boldsymbol{F} -\boldsymbol{a})\,\,,\qquad
    \boldsymbol{\gamma}=  \boldsymbol{F}^T\boldsymbol{d}_3\,, \vspace{4pt}\\
    \boldsymbol{\Psi}= (\boldsymbol{F}^T\mathrm{Grad}_s\boldsymbol{d}_3 + \boldsymbol{b} ) +\boldsymbol{\mathcal{E}} \boldsymbol{b}\, ,
\end{array}
\end{equation}
and the strain energy function \eqref{17} can be represented as
\begin{equation}\label{29}
    W=\check{W}(\boldsymbol{\mathcal{E}},\boldsymbol{\gamma},\boldsymbol{\Psi}).
\end{equation}
The tensor $\boldsymbol{\mathcal{E}}$ is a second order symmetric tensor accounting for extensional and in-plane shear strains, $\boldsymbol{\gamma}$ is the vector of transverse shear deformation, and $\boldsymbol{\Psi}$ is a second order tensor for the bending and twist strains.
We designate by $\,\boldsymbol{E}^e_\parallel\,=\boldsymbol{a}\boldsymbol{E}^e= E_{\beta\alpha}\boldsymbol{d}_\beta^0\otimes\boldsymbol{a}^\alpha$ the ``planar part'' of $\,\boldsymbol{E}^e$ (in the tangent plane) and by $\,\boldsymbol{E}^e_{\perp}=\boldsymbol{n^0}\boldsymbol{E}^e= E_{3\alpha} \boldsymbol{a}^\alpha$ the ``normal part'' of $\,\boldsymbol{E}^e$, and analogously $\,\boldsymbol{K}^e_\parallel\,=\boldsymbol{a}\boldsymbol{K}^e= K_{\beta\alpha}\boldsymbol{d}_\beta^0\otimes\boldsymbol{a}^\alpha$. Then, in view of \eqref{1}, \eqref{2}, the tensors \eqref{28} can be written in the alternative forms
\begin{equation}\label{30}
    \begin{array}{l}
   \boldsymbol{\mathcal{E}}= \frac{1}{2}\,\boldsymbol{E}^{e, T}\boldsymbol{E}^e+\mathrm{sym}( \boldsymbol{E}^e_\parallel)\,,\qquad
   \boldsymbol{\gamma}= \boldsymbol{n}^0\boldsymbol{E}^e=\boldsymbol{E}^e_{\perp}\, ,\vspace{4pt} \\
   \boldsymbol{\Psi}=  (\boldsymbol{E}^{e, T}+\boldsymbol{a})\boldsymbol{c} \boldsymbol{K}^e+ [\, \frac{1}{2}\,\boldsymbol{E}^{e, T}\boldsymbol{E}^e+ \mathrm{skew} (\boldsymbol{E}^e_\parallel)\,]\,\boldsymbol{b}\,,
\end{array}
\end{equation}
\end{remark}

\begin{remark}\label{rem2}
Instead of the first integral $\boldsymbol{U}_3$ introduced in the proof of the Theorem, one can consider alternatively the following first integral:
\begin{equation}\label{31}
    \boldsymbol{U}_5=(\boldsymbol{E}^{e,T}_\parallel+\boldsymbol{a}) (\boldsymbol{K}^e+\boldsymbol{K}^0).
\end{equation}
Indeed $ \boldsymbol{U}_5$ is a first integral of the system \eqref{24} since we have
\begin{equation*}
    \begin{array}{c}
       \dfrac{\mathrm{d}\boldsymbol{U}_5}{\mathrm{d}s} =
       \dfrac{\mathrm{d}}{\mathrm{d}s}\big[(\boldsymbol{E}^{e,T} +\boldsymbol{a}) \boldsymbol{a} (\boldsymbol{K}^e+\boldsymbol{K}^0)\big]=
       \Big(\dfrac{\mathrm{d}\boldsymbol{E}^e}{\mathrm{d}s}\Big)^T
       \boldsymbol{a}(\boldsymbol{K}^e+\boldsymbol{K}^0)+ (\boldsymbol{E}^{e,T}+\boldsymbol{a})\boldsymbol{a}\, \dfrac{\mathrm{d}\boldsymbol{K}^e}{\mathrm{d}s}
       \vspace{4pt}\\
       \qquad\quad =
       (\boldsymbol{E}^{e,T}+\boldsymbol{a})(-\boldsymbol{c})\boldsymbol{a} \, (\boldsymbol{K}^e+\boldsymbol{K}^0)
       +(\boldsymbol{E}^{e,T}+\boldsymbol{a})\boldsymbol{a}\,\boldsymbol{c}\,
       (\boldsymbol{K}^e+\boldsymbol{K}^0)   =   \boldsymbol{0}.
     \end{array}
\end{equation*}
On the other hand, the function $ \boldsymbol{U}_5$ given by \eqref{31} can be expressed in terms of $\boldsymbol{F}$ and $\mathrm{Grad}_s\boldsymbol{d}_3$ as follows
\begin{equation*}
    \begin{array}{l}
      \boldsymbol{U}_5=(\boldsymbol{E}^e+\boldsymbol{a})^T\boldsymbol{a}\, (\boldsymbol{K}^e+\boldsymbol{K}^0)= \boldsymbol{F}^T\boldsymbol{Q}^{e}\boldsymbol{a}\, \boldsymbol{K}
      \vspace{3pt}\\
   \qquad
   = \boldsymbol{F}^T\boldsymbol{Q}^{e}\boldsymbol{a}
      \big[(\partial_\alpha \boldsymbol{d}_2\cdot\boldsymbol{d}_3)\boldsymbol{d}_1^0
    +(\partial_\alpha \boldsymbol{d}_3\cdot\boldsymbol{d}_1)\boldsymbol{d}_2^0
    +(\partial_\alpha \boldsymbol{d}_1\cdot\boldsymbol{d}_2)\boldsymbol{d}_3^0\big] \otimes\boldsymbol{a}^\alpha
     \vspace{3pt}\\
   \qquad = \boldsymbol{F}^T(\boldsymbol{d}_i\otimes \boldsymbol{d}_i)\big[(\partial_\alpha \boldsymbol{d}_2\cdot\boldsymbol{d}_3)\boldsymbol{d}_1^0
    +(\partial_\alpha \boldsymbol{d}_3\cdot\boldsymbol{d}_1)\boldsymbol{d}_2^0
    \big]   \otimes\boldsymbol{a}^\alpha
    \vspace{3pt}\\
   \qquad =
        \boldsymbol{F}^T \big[(-\partial_\alpha \boldsymbol{d}_3\cdot\boldsymbol{d}_2)\boldsymbol{d}_1
    +(\partial_\alpha \boldsymbol{d}_3\cdot\boldsymbol{d}_1)\boldsymbol{d}_2
    \big]   \otimes\boldsymbol{a}^\alpha
       = \boldsymbol{F}^T   (\boldsymbol{d}_2\otimes \boldsymbol{d}_1 - \boldsymbol{d}_1\otimes \boldsymbol{d}_2)(\partial_\alpha \boldsymbol{d}_3 \otimes\boldsymbol{a}^\alpha)
        \vspace{3pt}\\
   \qquad
    =
    \boldsymbol{F}^T \big[(\boldsymbol{d}_3\times\boldsymbol{d}_i)\otimes \boldsymbol{d}_i\big] \,\mathrm{Grad}_s\boldsymbol{d}_3
    =
    \boldsymbol{F}^T \big(\boldsymbol{d}_3\times 1\!\!1_3\big) \,\mathrm{Grad}_s\boldsymbol{d}_3
    =
    \boldsymbol{F}^T \big(\boldsymbol{d}_3\times \,\mathrm{Grad}_s\boldsymbol{d}_3 \big)\,.
    \end{array}
\end{equation*}
Thus, if we employ the first integrals $ \boldsymbol{U}_1$ , $ \boldsymbol{U}_2$, and $ \boldsymbol{U}_5$ we obtain the following alternative representation of the strain energy function
\begin{equation}\label{32}
    W =
      \bar W\big(\boldsymbol{F}^{ T}\boldsymbol{F} \,\,, \,\,
        \boldsymbol{F}^T \boldsymbol{d}_3 \,\,, \,\,  \boldsymbol{F}^T (\boldsymbol{d}_3\times \,\mathrm{Grad}_s\boldsymbol{d}_3  )\, \big).
\end{equation}
Like in Remark \ref{rem1}, one can introduce the measures of deformation  $\boldsymbol{\mathcal{E}}$ , $\boldsymbol{\gamma}$ and $\boldsymbol{\Phi}$ defined by \eqref{28}$_{1,2}$ and respectively
\begin{equation}\label{33}
    \begin{array}{l}
   \boldsymbol{\Phi}
    = [\boldsymbol{F}^T(\boldsymbol{d}_3\times\mathrm{Grad}_s\boldsymbol{d}_3 ) + \boldsymbol{n}^0 \times\boldsymbol{b}\,]+\boldsymbol{\mathcal{E}}(\boldsymbol{n}^0 \times  \boldsymbol{b}).
\end{array}
\end{equation}
The tensor $\boldsymbol{\Phi}$ accounts for bending and twist strains and was previously introduced by Zhilin \cite{Zhilin06}. It can be rewritten  in the form
\begin{equation}\label{34}
    \begin{array}{l}
   \boldsymbol{\Phi}
    = (\boldsymbol{E}^{e, T}+\boldsymbol{a})\boldsymbol{K}^e_\parallel- [ \frac{1}{2}\,\boldsymbol{E}^{e, T}\boldsymbol{E}^e+ \mathrm{skew} (\boldsymbol{E}^e_\parallel)\,]\,\boldsymbol{c}\,\boldsymbol{b}.
\end{array}
\end{equation}
With the help of $\,\boldsymbol{\mathcal{E}}$ , $\boldsymbol{\gamma}$ and $\boldsymbol{\Phi}$  one can give the following representation for the strain energy function
\begin{equation}\label{35}
    W=\breve{\breve{W}}(\boldsymbol{\mathcal{E}},\boldsymbol{\gamma},\boldsymbol{\Phi}).
\end{equation}
The form \eqref{35} was established previously in \cite{Zhilin06}. The only difference to the representation \eqref{29} is the definition of the bending--twist tensor $\boldsymbol{\Phi}$ in \eqref{33}, as compared to $\boldsymbol{\Psi}$ in \eqref{28}$_3\,$.

By comparison of the relations  \eqref{28}$_3\,$ and \eqref{33} one can see that the definition \eqref{28}$_3\,$ for the bending--twist tensor is more appropriate since the vector product in \eqref{33} introduces an additional (unnecessary) rotation of $\,\,\mathrm{Grad}_s\boldsymbol{d}_3\,$ in the plane of $\,\{\boldsymbol{d}_1,\boldsymbol{d}_2\}$.
\end{remark}

\begin{remark}\label{rem3}
Let us write the above representations in the linearized theory. For the linear theory we introduce the small (infinitesimal) displacement $\,\boldsymbol{u}=\boldsymbol{y}-\boldsymbol{y}^0$ and the vector of small rotations $\boldsymbol{\psi}$ such that \cite{Pietraszkiewicz-book04,Zhilin06}
$$ \boldsymbol{Q}^e\stackrel{.}{=}\, 1\!\!1_3+ \boldsymbol{\psi}\times 1\!\!1_3\,,\qquad  \boldsymbol{Q}^{e,T}\stackrel{.}{=}\, 1\!\!1_3- \boldsymbol{\psi}\times 1\!\!1_3\,.$$
Then, we have
\begin{equation*}
    \begin{array}{c}
       \boldsymbol{F}\stackrel{.}{=}\,\mathrm{Grad}_s\boldsymbol{u}+\boldsymbol{a}= (\partial_\alpha\boldsymbol{u}+\boldsymbol{a}_\alpha)\otimes\boldsymbol{a}^\alpha,
       \vspace{4pt}\\
       \mathrm{axl}(\partial_\alpha \boldsymbol{Q}^e  \boldsymbol{Q}^{e,T})= \partial_\alpha\boldsymbol{\psi},\qquad \mathrm{axl}(\dot{ \boldsymbol{Q}^e  } \boldsymbol{Q}^{e,T})= \dot{\boldsymbol{\psi}}
    \end{array}
\end{equation*}
and from \eqref{1}, \eqref{2} we find, in the approximation of the linear theory,
\begin{equation}\label{36}
\begin{array}{l}
    \boldsymbol{E}^e\stackrel{.}{=}\, \mathrm{Grad}_s\boldsymbol{u}-(\boldsymbol{\psi}\times \boldsymbol{a})= (\partial_\alpha\boldsymbol{u}- \boldsymbol{\psi}\times \boldsymbol{a}_\alpha)\otimes\boldsymbol{a}^\alpha,\vspace{4pt}\\
    \boldsymbol{K}^e\stackrel{.}{=}\, \mathrm{Grad}_s\boldsymbol{\psi}= \partial_\alpha\boldsymbol{\psi}\otimes\boldsymbol{a}^\alpha.
    \end{array}
\end{equation}
Using \eqref{30}, \eqref{34} and \eqref{36} we find the expressions of  $\boldsymbol{\mathcal{E}}$ , $\boldsymbol{\gamma}$ and $\boldsymbol{\Psi}$ in the linear theory
\begin{equation}\label{37}
    \begin{array}{l}
    \boldsymbol{\mathcal{E}}\stackrel{.}{=} \mathrm{sym}(\boldsymbol{a}\,\mathrm{Grad}_s\boldsymbol{u}),\qquad
    \boldsymbol{\gamma}\stackrel{.}{=}    \boldsymbol{n}^0\mathrm{Grad}_s\boldsymbol{u}+ \boldsymbol{c}\,\boldsymbol{\psi}, \vspace{4pt}\\
    \boldsymbol{\Psi}\stackrel{.}{=}  \boldsymbol{c}\,\boldsymbol{\Phi}
    \stackrel{.}{=}   \boldsymbol{c}\,\mathrm{Grad}_s(\boldsymbol{a}\,\boldsymbol{\psi})+ [\mathrm{skew}(\boldsymbol{a}\,\mathrm{Grad}_s\boldsymbol{u})] \,\boldsymbol{b},
\end{array}
\end{equation}
We note that the relation between the tensors $\boldsymbol{\Psi}$  and $\boldsymbol{\Phi}$ in the linear theory is very simple: $\boldsymbol{\Psi}\stackrel{.}{=}\boldsymbol{c}\boldsymbol{\Phi}\,$.
If we decompose the vector of small rotations as $\boldsymbol{\psi}=\psi_i\boldsymbol{a}^i$, then the drilling rotations are described by the component $\boldsymbol{n}^0\cdot\boldsymbol{\psi}= \psi_3\,$. We remark that
\begin{equation*}
    \begin{array}{c}
      \boldsymbol{c}\,\boldsymbol{\psi}=\dfrac{1}{\sqrt{a}}\,(\boldsymbol{a}_1\otimes \boldsymbol{a}_2- \boldsymbol{a}_2\otimes \boldsymbol{a}_1)\boldsymbol{\psi} = \dfrac{1}{\sqrt{a}}\,(\psi_2\boldsymbol{a}_1 - \psi_1\boldsymbol{a}_2),
       \vspace{4pt}\\
       \mathrm{Grad}_s(\boldsymbol{a}\,\boldsymbol{\psi})= \mathrm{Grad}_s(\psi_\beta\boldsymbol{a}^\beta) = \partial_\alpha(\psi_\beta\boldsymbol{a}^\beta) \otimes\boldsymbol{a}^\alpha
    \end{array}
\end{equation*}
and from \eqref{37} we see that the tensors $\boldsymbol{\mathcal{E}}$ , $\boldsymbol{\gamma}$ and $\boldsymbol{\Psi}$ are indeed independent of the drilling rotations $\psi_3\,$.

In this case one gets the Reissner-type kinematics of shells \cite{Wisniewski10,Neff_Hong_Reissner08} with 5 degrees of freedom.
\end{remark}

\section{The case of isotropic shells}\label{sec4}

The local symmetry group for 6-parameter elastic shells has been studied in \cite{Eremeyev06}. The expression of the strain energy for a physically linear model has the general form
\begin{equation}\label{38}
    \begin{array}{crl}
   2 W(\boldsymbol{E}^e,\boldsymbol{K}^e)&=& \alpha_1\big(\mathrm{tr}  \boldsymbol{E}^e_{\parallel}\big)^2 +\alpha_2\,  \mathrm{tr} \big(\boldsymbol{E}^e_{\parallel}\big)^2    + \alpha_3\, \mathrm{tr}\big(\boldsymbol{E}^{e,T}_{\parallel}  \boldsymbol{E}^e_{\parallel} \big)  + \alpha_4     (\boldsymbol{n}^0 \boldsymbol{E}^e)^2 \vspace{3pt} \\
  &&    + \beta_1\big(\mathrm{tr}  \boldsymbol{K}^e_{\parallel}\big)^2  +\beta_2 \, \mathrm{tr} \big(\boldsymbol{K}^e_{\parallel}\big)^2    + \beta_3 \, \mathrm{tr}\big(\boldsymbol{K}^{e,T}_{\parallel} \boldsymbol{K}^e_{\parallel} \big)  + \beta_4     (\boldsymbol{n}^0 \boldsymbol{K}^e)^2\!  .
\end{array}
\end{equation}
The constitutive coefficients $\alpha_1$ ,..., $\alpha_4$ , $\beta_1$ ,..., $\beta_4$ can depend in general on the initial structure curvature tensor  $ \boldsymbol{K}^0$, but we  assume for simplicity that they are constant. Provided that the coefficients $\alpha_k$ and $\beta_k$  satisfy the following inequalities
\begin{equation}\label{39}
    \begin{array}{l}
    2\alpha_1+\alpha_2+\alpha_3>0,\qquad \alpha_2+\alpha_3>0,\qquad
    \alpha_3-\alpha_2>0, \qquad  \alpha_4>0,\\
    2\beta_1+\beta_2+\beta_3>0,\qquad
     \beta_2+\beta_3>0,\qquad \beta_3-\beta_2>0,\qquad  \beta_4>0,
\end{array}
\end{equation}
the energy function \eqref{38} is coercive in the sense that there exists a constant $C>0$ with
\begin{equation*}
    W( {\boldsymbol{E}^e}, {\boldsymbol{K}^e})\,\geq\, C\big( \,   \|\boldsymbol{E}^e\|^2 +  \|\boldsymbol{K}^e\|^2\,\big).
\end{equation*}
Under the conditions \eqref{39} we can prove the existence of minimizers for isotropic elastic shells. To this aim we apply the recent existence result in the theory of 6--parameter shells given by Theorem 1 in \cite{Birsan-Neff-MMS-2013}, see also \cite{Neff_plate04_cmt}.

\begin{remark}\label{rem4}
One can find in the literature some simplified versions of the strain energy \eqref{38} for 6-parameter isotropic shells. For instance, in \cite{Pietraszkiewicz-book04,Pietraszkiewicz10} the following special form is employed
\begin{equation}\label{40}
   \begin{array}{l}
    2W(\boldsymbol{E}^e,\boldsymbol{K}^e)= \,\,\,C\big[\,\nu \,(\mathrm{tr} \boldsymbol{E}^e_{\parallel})^2 +(1-\nu)\, \mathrm{tr}(\boldsymbol{E}^{e,T}_{\parallel}  \boldsymbol{E}^e_{\parallel} )\big]  + \alpha_{s\,}C(1-\nu) \, \boldsymbol{n}^0 \boldsymbol{E}^e \boldsymbol{E}^{e,T}  \boldsymbol{n}^0
    \vspace{3pt}\\
    \qquad\qquad\quad\,\, +\,D \big[\,\nu\,(\mathrm{tr} \boldsymbol{K}^e_{\parallel})^2 + (1-\nu)\, \mathrm{tr}(\boldsymbol{K}^{e,T}_{\parallel}  \boldsymbol{K}^e_{\parallel} )\big]  + \alpha_{t\,}D(1-\nu) \,    \boldsymbol{n}^0 \boldsymbol{K}^e \boldsymbol{K}^{e,T}  \boldsymbol{n}^0,
\end{array}
\end{equation}
where $C=\frac{E\,h}{1-\nu^2}\,$ is the stretching (in-plane) stiffness of the shell, $D=\frac{E\,h^3}{12(1-\nu^2)}\,$ is the bending stiffness, $h$ is the thickness of the shell, and $\alpha_s\,$, $\alpha_t$ are two shear correction factors. Also,    $E $ and $ \nu$ denote the Young modulus and  Poisson ratio of the isotropic and homogeneous material. By the numerical treatment of non-linear shell problems, the values of the shear correction factors have been set to  $\alpha_s=5/6$, $\alpha_t=7/10$ in \cite{Pietraszkiewicz10}.
In this simpler case, the coefficients $\alpha_k$ and $\beta_k$ from \eqref{38} have the expressions
\begin{equation}\label{41}
    \begin{array}{l}
    \alpha_1=C\nu,\qquad \alpha_2=0,\qquad \alpha_3=C(1-\nu),\qquad
     \alpha_4=\alpha_sC(1-\nu), \\     \beta_1=D\nu,\qquad \beta_2=0,\qquad
      \beta_3=D(1-\nu), \qquad \beta_4=\alpha_tD(1-\nu).
\end{array}
\end{equation}
We remark that the conditions \eqref{39} are satisfied for the values \eqref{41}, by virtue of the well-known inequalities $\,E>0\,$ and $\, -1<\nu<\dfrac{1}{2}\,\,$ (or equivalently, $\,\mu>0$ and $\, 2\mu+3\lambda>0$, in terms of the Lam\'e moduli $\lambda,\mu$).
\end{remark}

Let us consider next isotropic shells without drilling rotations. Zhilin \cite{Zhilin06} determined the following form of the strain energy $W$ as a quadratic function of its arguments $\boldsymbol{\mathcal{E}},\,\boldsymbol{\gamma},\,\boldsymbol{\Phi}$
\begin{equation}\label{42}
\begin{array}{r}
    2W=2\breve{\breve{W}}(\boldsymbol{\mathcal{E}},\boldsymbol{\gamma},\boldsymbol{\Phi})=
C\big[(1-\nu)\|\boldsymbol{\mathcal{E}}\|^2+\nu(\mathrm{tr}\,\boldsymbol{\mathcal{E}})^2\big] +\frac{1}{2}\,C(1-\nu)\kappa\,\boldsymbol{\gamma}^2
\vspace{4pt}\\
\qquad +
D\big[\,\|\boldsymbol{\Phi}\|^2 -\nu\,\mathrm{tr}( \boldsymbol{\Phi}^2)- \frac{1}{2}\,(1-\nu)\,(\mathrm{tr}\,\boldsymbol{\Phi})^2\,\big],
\end{array}
\end{equation}
where $\kappa$ is a shear correction factor. The role of shear correction factors has been extensively discussed in the literature, see e.g. \cite{Neff_Chelminski_ifb07}. For the determination of the constitutive coefficients presented in \eqref{42} Zhilin has employed the solutions of some shell problems within the linear theory. Taking into account the relations $\boldsymbol{\Phi}\stackrel{.}{=}-\boldsymbol{c}\boldsymbol{\Psi}\,$  and
$$ \mathrm{tr}\big[( \boldsymbol{c}\,\boldsymbol{\Psi})^2\,\big]= \mathrm{tr}\big(\boldsymbol{\Psi}^T\boldsymbol{\Psi}\big)- \big(\mathrm{tr}\boldsymbol{\Psi}\big)^2 = 2\|\,\mathrm{dev_2\,sym}\boldsymbol{\Psi}\,\|^2-
\mathrm{tr}( \boldsymbol{\Psi}^2),
$$
then we deduce from \eqref{42} the expression of the strain energy $W$ as function of $\boldsymbol{\mathcal{E}},\,\boldsymbol{\gamma},\,\boldsymbol{\Psi}\,$:
\begin{equation}\label{43}
    \begin{array}{r}
    2W=2 \check{W}(\boldsymbol{\mathcal{E}},\boldsymbol{\gamma},\boldsymbol{\Psi})=  C\big[\,(1-\nu)\|\boldsymbol{\mathcal{E}}\|^2+ \nu(\mathrm{tr}\,\boldsymbol{\mathcal{E}})^2\,\big]
      +\frac{1}{2}\,C(1-\nu)\kappa\,\boldsymbol{\gamma}^2 \vspace{4pt} \\
      + D\big[\,\frac{1}{2}\,(1-\nu)\|\boldsymbol{\Psi}\|^2
      +\frac{1}{2}\,(1-\nu)\,\mathrm{tr}( \boldsymbol{\Psi}^2)+\nu\,(\mathrm{tr}\,\boldsymbol{\Psi})^2\,\big],
    \end{array}
\end{equation}
or equivalently,
\begin{equation}\label{44}
\begin{array}{r}
    2W=2 \check{W}(\boldsymbol{\mathcal{E}},\boldsymbol{\gamma},\boldsymbol{\Psi})=  C\big[\,(1-\nu)\|\boldsymbol{\mathcal{E}}\|^2+ \nu(\mathrm{tr}\,\boldsymbol{\mathcal{E}})^2\,\big]
      +\frac{1}{2}\,C(1-\nu)\kappa\,\boldsymbol{\gamma}^2 \vspace{4pt} \\
      + D\big[\,\frac{1}{2}\,(1+\nu)(\mathrm{tr}\,\boldsymbol{\Psi})^2
      + (1-\nu)\,\|\,\mathrm{dev_2\,sym}\boldsymbol{\Psi}\,\|^2\,\big].
      \end{array}
\end{equation}
In order to compare this with the energy \eqref{38} for 6-parameter isotropic shells, we insert the expressions \eqref{30} into \eqref{44} and we find
\begin{equation}\label{45}
\begin{array}{l}
    2W( {\boldsymbol{E}^e}, {\boldsymbol{K}^e}) = C\big[\,\frac{1}{2}\,(1+\nu)\big(\frac{1}{2}\,\|\boldsymbol{E}^e\|^2+ \mathrm{tr}(\boldsymbol{E}^e_\parallel)\big)^2
    \vspace{4pt}\\
     \qquad   \quad\,\,\, + (1\!-\!\nu)\|\frac12\, \boldsymbol{E}^{e,T}\boldsymbol{E}^e-\frac14 \,\|\boldsymbol{E}^e\|^2 \boldsymbol{a} +
    \mathrm{dev_2\,sym}(\boldsymbol{E}^e_\parallel)\,\|^2\big] +
    \frac{1}{2}\,C (1\!-\!\nu)\,\kappa\,\|\boldsymbol{n}^0\boldsymbol{E}^e\|^2
    \vspace{4pt}\\
     \qquad   \quad\,\,\,
      +D\Big[\,\frac{1}{2}\,(1+\nu)\big(\mathrm{tr}
      (\boldsymbol{E}^{e,T}_\parallel\boldsymbol{c}\boldsymbol{K}^e_\parallel)+ \mathrm{tr}
      (\boldsymbol{c}\boldsymbol{K}^e_\parallel)+ \mathrm{tr}[ (\frac12\, \boldsymbol{E}^{e,T}\boldsymbol{E}^e+ \mathrm{skew}\,\boldsymbol{E}^e_\parallel)\,\boldsymbol{b}\,]\,\big)^2
      \vspace{4pt}\\
     \qquad \quad\,\,\,
      +(1-\nu)\big\| \, \mathrm{dev_2\,sym} \big[ \boldsymbol{E}^{e,T}_\parallel\boldsymbol{c}\boldsymbol{K}^e_\parallel + \boldsymbol{c}\boldsymbol{K}^e_\parallel+
        (\frac12\, \boldsymbol{E}^{e,T}\boldsymbol{E}^e+ \mathrm{skew}\,\boldsymbol{E}^e_\parallel\,)\,\boldsymbol{b}\,\big]\big\|^2\Big].
      \end{array}
\end{equation}
We observe that the energy \eqref{45} is super-quadratic as a function of the arguments $ (\boldsymbol{E}^e, \boldsymbol{K}^e)$. In the case of physically linear shells, when only the quadratic terms in $(\boldsymbol{E}^e, \boldsymbol{K}^e)$ are taken into account, we obtain the simplified expression of the energy density (for the case when the constitutive coefficients are independent of $\boldsymbol{K}^0$)
\begin{equation}\label{46}
    \begin{array}{l}
    2W( {\boldsymbol{E}^e}, {\boldsymbol{K}^e}) = C\big[\nu(\mathrm{tr}\,\boldsymbol{E}^e_\parallel)^2
    +\,\frac{1}{2}\, (1\!-\!\nu)\,   \mathrm{tr}(\boldsymbol{E}^e_\parallel)^2
    +\,\frac{1}{2}\, (1\!-\!\nu)\,  \mathrm{tr}(\boldsymbol{E}^{e,T}_\parallel\boldsymbol{E}^e_\parallel)\big]
    \vspace{4pt}\\
      \qquad\,\,\,
    +\frac{1}{2}\,C (1\!-\!\nu)\,\kappa\,\|\boldsymbol{n}^0\boldsymbol{E}^e\|^2
     +D\big[\,\mathrm{tr}(\boldsymbol{K}^{e,T}_\parallel\boldsymbol{K}^e_\parallel)- \frac{1}{2}\, (1\!-\!\nu)\,   (\mathrm{tr}\boldsymbol{K}^e_\parallel)^2 - \!\nu \,   \mathrm{tr}(\boldsymbol{K}^e_\parallel)^2\,\big].
    \end{array}
\end{equation}
By comparison of the relations \eqref{38} and \eqref{46} we see that the   coefficients $\alpha_k$ and $\beta_k$ that correspond to shells without drilling rotations are
\begin{equation}\label{47}
    \begin{array}{l}
    \alpha_1=C\,\nu,\qquad \alpha_2=\alpha_3=\frac{1}{2}\,C (1-\nu)\,,\qquad \alpha_4=\frac{1}{2}\,C (1-\nu)\,\kappa, \vspace{4pt}\\
    \beta_1=-\frac{1}{2}\,D (1-\nu)\,,\qquad \beta_2=-D\nu,\qquad \beta_3=D,\qquad \beta_4=0.
    \end{array}
\end{equation}
or equivalently,
\begin{equation*}
     \begin{array}{c}
    \alpha_1=h\,\dfrac{2\mu\lambda}{2\mu+\lambda}\,,\qquad \alpha_2=\alpha_3=h\,\mu\,,\qquad \alpha_4=h\,\mu\,\kappa, \vspace{4pt}\\
    \beta_1=-\dfrac{h^3}{12}\,\mu\,,\qquad \beta_2=-\dfrac{h^3}{12}\,\dfrac{2\mu\lambda}{2\mu+\lambda}\, \,,\qquad \beta_3=\dfrac{h^3}{12}\,\dfrac{4\mu(\mu+\lambda)}{2\mu+\lambda}\,,\qquad \beta_4=0.
    \end{array}
\end{equation*}

\begin{remark}\label{rem5}
We see that the inequalities \eqref{39} are not satisfied for the set of constitutive coefficients \eqref{47}. Indeed, we find
\begin{equation*}
 \begin{array}{c}
    2\alpha_1+\alpha_2+\alpha_3= h\,\dfrac{E}{1-\nu}\,=h\,\dfrac{2\mu(2\mu+3\lambda)}{2\mu+\lambda}\,
    >0,\qquad \alpha_2+\alpha_3= h\,\dfrac{E}{1+\nu}\,=2h\, \mu   >0, \vspace{4pt}\\
      \beta_2+\beta_3=\,\dfrac{h^3}{12}\,\dfrac{E}{1+\nu}\,=\,\dfrac{h^3}{6}\, \mu
      >0,\qquad \beta_3-\beta_2 =\, \dfrac{h^3}{12}\,\dfrac{E}{1-\nu}\,=\,\dfrac{h^3}{6}\, \dfrac{\mu(2\mu+3\lambda)}{2\mu+\lambda}\,
      >0.
       \end{array}
\end{equation*}
but also
\begin{equation}\label{50}
    \alpha_3-\alpha_2=0 , \qquad 2\beta_1+\beta_2+\beta_3=0, \qquad \beta_4=0.
\end{equation}
This means that the strain energy \eqref{46} for shells without drilling rotations is not positive definite, but only positive semi-definite. The existence theorem presented in \cite{Birsan-Neff-MMS-2013} does not apply here.
The proof of the existence of minimizers is more difficult in this case, but it can be pursued using the same methods as in Neff \cite{Neff_plate07_m3as}. In the works \cite{Neff_plate04_cmt,Neff_plate07_m3as} a plate model derived directly from the 3D equations of Cosserat elasticity is studied. The relation $\,\alpha_3-\alpha_2=0 \,$ from \eqref{50} corresponds to the case of zero Cosserat couple modulus ($\mu_c=0$) in \cite{Neff_plate04_cmt,Neff_plate07_m3as}. The  comparison between the 6-parameter resultant shell theory and the model developed in \cite{Neff_plate04_cmt,Neff_plate07_m3as} has been presented in \cite{Birsan-Neff-AnnRom-2012,Birsan-Neff-JElast-2013,Birsan-Neff-MMS-2013}.
\end{remark}

\begin{remark}\label{rem6}
In this section we have considered for simplicity the case when the constitutive coefficients $\,\,\alpha_k$ and $\beta_k\,$ are independent of the initial curvature tensor $\boldsymbol{K}^0$ (or equivalently on $\boldsymbol{b}$, since $\boldsymbol{a}\boldsymbol{K}^0=\boldsymbol{c}\boldsymbol{b}$). However, a similar analysis can be performed also in the more complicated case when  the constitutive coefficients of the strain energy function $W$  depend on  $\boldsymbol{K}^0$.
\end{remark}

\bibliographystyle{plain}
\bibliography{literatur_Birsan}

\end{document}